\definecolor{marin}{rgb}   {0.,   0.3,   0.7}
\definecolor{rouge}{rgb}   {0.8,   0.,   0.}
\definecolor{sepia}{rgb}   {0.8,   0.5,   0.}
\newcommand{ \black }{\color{black} }
\theoremstyle{plain}
\newtheorem{theorem}{Theorem}[section]
\newtheorem{lemma}[theorem]{Lemma}
\newtheorem{corollary}[theorem]{Corollary}
\newtheorem{definition}[theorem]{Definition} \theoremstyle{remark}
\newcommand {\aplt} {\ {\raise-.5ex\hbox{$\buildrel<\over\sim$}}\ }
\newcommand {\gplt} {\ {\raise-.5ex\hbox{$\buildrel>\over\sim$}}\ }
\newcommand{\dd}{\mathrm{d}}
\newcommand{\ee}{\mathrm{e}}
\newcommand{\R}{\mathbb{R}}
\newcommand{\Z}{\mathbb{Z}}
\newcommand{\V}{V_n(0)}
\newcommand{\Vs}{V_n(s)}
\newcommand{\Vx}{V_n(\xi)}
\newcommand{\boka}{\boldsymbol{\kappa}}
\newcommand{\bola}{\boldsymbol{\lambda}}
\newcommand{\bonu}{\boldsymbol{\nu}}
\newcommand{\bx}{\mathbf{x}}
\newcommand{\p}{}
\newcommand{\m}{-}
\def\@makefnmark{\hbox{$\m@th^{\@thefnmark}$}}
\author{Marvin Kn\"oller}
\address{Fakult\"{a}t f\"{u}r Mathematik, Karlsruhe Institute of Technology,
Englerstr.~2, 76131 Karlsruhe, Germany}
\email{marvin.knoeller@icloud.com}
\author{Alexander Ostermann}
\address{Department of Mathematics, University of Innsbruck,
Technikerstr.~13, 6020 Innsbruck, Austria}
\email{alexander.ostermann@uibk.ac.at}
\author{Katharina Schratz}
\address{Fakult\"{a}t f\"{u}r Mathematik, Karlsruhe Institute of Technology,
Englerstr.~2, 76131 Karlsruhe, Germany}
\email{katharina.schratz@kit.edu}
\begin{abstract}
Standard numerical integrators suffer from an order reduction when applied to nonlinear Schr\"{o}dinger equations with low-regularity initial data. For example, standard Strang splitting requires the boundedness of the solution in $H^{r+4}$ in order to be second-order convergent in $H^r$, i.e., it requires the boundedness of four additional derivatives of the solution. We present a new type of integrator that is based on the variation-of-constants formula and makes use of certain resonance based approximations in Fourier space. The latter can be efficiently evaluated by fast Fourier methods. For second-order convergence, the new integrator requires two additional derivatives of the solution in one space dimension, and three derivatives in higher space dimensions. Numerical examples illustrating our convergence results are included. These examples demonstrate the clear advantage of the Fourier integrator over standard Strang splitting for initial data with low regularity.
\end{abstract}
\keywords{Cubic nonlinear Schr\"{o}dinger equation -- exponential-type time integrator -- low regularity -- convergence}
\title[Fourier integrators for Schr\"{o}dinger equations]{A Fourier integrator for the cubic nonlinear Schr\"{o}dinger equation with rough initial data}
\begin{document}
\maketitle
\section{Introduction}\label{sec:intro}
The cubic nonlinear  Schr\"{o}dinger (NLS) equation
\begin{equation}\label{nls}
i \partial_t u(t,x) =\m \Delta  u(t,x) + \mu \vert u(t,x) \vert^{2}u(t,x),\qquad (t,x) \in \R \times \mathbb{T}^d,\quad  \mu \in \R,
\end{equation}
has been extensively studied in the numerical analysis literature. In particular, the error behavior of classical numerical schemes, e.g.~splitting methods and exponential integrators,  approximating solutions of~\eqref{nls} is nowadays  well understood: methods of arbitrary high order can be constructed for sufficiently smooth solutions.  More precisely, the global error of classical schemes for the cubic Schr\"{o}dinger equation~\eqref{nls} is dominated by terms of the form
\begin{equation}\label{gg}
\tau^\nu  (-\Delta)^\nu u(t)\quad \text{with} \quad  \nu \geq 0
\end{equation}
and $\tau$ denoting the time step size. This error behavior is caused by the fact that the nonlinear frequency interactions are neglected as the free Schr\"{o}dinger group $\mathrm{e}^{i t \Delta}$ in decoupled from the nonlinearity in classical schemes. Control of the error term~\eqref{gg} requires the boundedness of (at least) two additional derivatives of the solution for each order in $\tau$. \black For instance, the well known Strang splitting scheme applied to \eqref{nls} is second-order convergent in $H^r$ for solutions in $H^{r+4}$ for $r\geq 0$, see \cite{Lubich08}. Here $H^r$ denotes the classical Sobolev space on $\mathbb{T}^d$. For an extensive overview on splitting and exponential integration methods we further refer to \cite{HLW,HochOst10,HLRS10,McLacQ02}, and for their rigorous convergence analysis in the context of semilinear Schr\"{o}dinger equations we refer to \cite{BeDe02,CanG15,CCO08,CoGa12,Duj09,Faou12,JaLu00,Lubich08,Ta12} and the references therein.

From an analytical point of view a particular focus has been laid in the last decades on the investigation of the local and global well-posedness of the Schr\"{o}dinger equation \eqref{nls} in low regularity spaces, i.e., the existence of solutions for rough initial values, see~\cite{Bour93, Tao06}. The essential step in the local well-posedness analysis  lies in controlling  the underlying resonances of the equation. Inspired by these techniques we could recently develop a first-order exponential type integrator for the cubic Schr\"{o}dinger equation \eqref{nls} which allows convergence under far less regularity than required in~\eqref{gg} (set $\nu = 1$). The global error of the  new scheme is in particular driven by terms of the form
\begin{equation}\label{fo}
\tau (-\Delta)^{1/2} u(t)
\end{equation}
\black
which only requires the boundedness of one additional derivative of the solution, see \cite{OSch2017} for the detailed construction of the scheme and the error analysis.

In this manuscript we present for the first time \black a \emph{second-order Fourier integrator} for the Schr\"{o}dinger equation \eqref{nls} which allows us to lower the classical regularity assumptions \eqref{gg} and nevertheless obtain a convergence order $\nu > 1$. As in \cite{OSch2017}, the basic idea consists in filtering the linear flow by \black looking at the so-called \emph{twisted variable}
\begin{equation*}\label{twist}
v(t) = \ee^{\m i t\Delta } u(t)
\end{equation*}
which satisfies the \emph{twisted} NLS equation
\begin{equation*}\label{nichtegal}
i \partial_t v(t) = \mu \ee^{\m i t \Delta} \Big[ \bigl\vert  \ee^{\p i t \Delta} v(t)\bigr\vert^2  \ee^{\p i t \Delta} v(t)\Big]
\end{equation*}
with mild solution given by
\begin{equation*}\label{vsold}
v(t_n+\tau) = v(t_n) - i \mu \int_0^\tau \ee^{\m i (t_n+s) \Delta} \Big[ \bigl\vert  \ee^{\p i (t_n+s) \Delta} v(t_n+s)\bigr\vert^2  \ee^{\p i (t_n+s) \Delta} v(t_n+s)\Big]\dd s.
\end{equation*}
Inserting this expression iteratively into its right-hand side gives an expansion of the twisted variable in terms of the step size $\tau$. In one space dimension, the first term of this expansion is given by
$$
- i \mu \int_0^\tau \ee^{\m i (t_n+s) \partial_x^2} \Big[ \bigl\vert  \ee^{\p i (t_n+s) \partial_x^2} v(t_n)\bigr\vert^2  \ee^{\p i (t_n+s) \partial_x^2} v(t_n)\Big]\dd s.
$$
Using standard Fourier techniques, the term can be expressed as
$$
-i\mu \ee^{-i t_n \partial_x^2} \int_0^\tau \sum_{k_1,k_2,k_3 \in \Z}  \ee^{i s \big[ \left(k_1+k_2+k_3\right)^2 + k_1^2- \left(k_2^2+k_3^2\right) \big] } \hat{\overline{v}}_{k_1}\hat v_{k_2} \hat v_{k_3} \ee^{i (k_1+k_2+k_3) x} \,\dd s
$$
and integrated exactly without much effort. The same applies to the other terms of the expansion. Thus, this procedure eventually leads to a numerical scheme of arbitrary high order that would not require boundedness of additional derivatives of the solution. However, two problems are inherent to this procedure, namely resonances and computational costs. The first problem consists in finding all $k_1, k_2, k_3 \in\Z$ such that
$$
\left(k_1+k_2+k_3\right)^2 + k_1^2- \left(k_2^2+k_3^2\right) = 0.
$$
These resonant frequencies have to be treated separately, before carrying out the integration
$$
-i\mu \ee^{-i t_n \partial_x^2}\sum_{k_1,k_2,k_3 \in \Z} \frac{\ee^{i \tau  \big[\left(k_1+k_2+k_3\right)^2 + k_1^2- \left(k_2^2+k_3^2\right) \big]}-1}{ \left(k_1+k_2+k_3\right)^2 + k_1^2- \left(k_2^2+k_3^2\right) }\, \hat{\overline{v}}_{k_1}\hat v_{k_2} \hat v_{k_3} \ee^{i (k_1+k_2+k_3) x}.
$$
The second problem consists in evaluating these expressions. This pointwise evaluation is unduly expensive as it requires $\mathcal O(N^3)$ operations, in general.

In our previous work \cite{OSch2017} we took another approach leading to a scheme which only required $\mathcal O(N \log N)$ operations and still allowed reduced regularity assumptions for the solution. We intend to follow the same ideas here. The second-order low regularity construction, however, is much more involved. This is mainly due to the structure of resonances. \black In particular, a straightforward higher-order extension of the ideas for the first-order scheme developed in \cite{OSch2017} would only lead to an unstable scheme. Therefore, new techniques need to be applied.

Based on a rigorous higher-order resonance analysis we construct the second-order Fourier integrator
\begin{equation}\label{uuintro}
\begin{aligned}
u^{n+1}
& = \ee^{ i \tau \partial_x^2}  \Bigl(\ee^{i \mu \tau \vert u^n\vert^2}u^n - i \mu \bigl(J_{1,x}^\tau\left( u^n\right) + J_{2,x}^\tau \left( u^n\right)\bigr)\Bigr),
\end{aligned}
\end{equation}
where $u^n$ approximates the solution of \eqref{nls} at time $t = t_n$. For the precise structure of the correction terms see Lemmas~\ref{lem:int1} and \ref{lem:int2} below. The new scheme \eqref{uuintro} firstly allows second-order approximations of \eqref{nls} in $H^r$ for solutions in $H^{r+2}$ in the one dimensional setting $d=1$ with a global error  driven by terms of the form
\begin{equation*}
\tau^2  \partial_{xx} u(t),
\end{equation*}
\black
see Theorem~\ref{thm-1d} for the precise convergence estimate. This in particular includes the important class of classical solutions in $L^2$ (set $r =0$) and extends the first-order bound \eqref{fo} in a natural and expected way. However, due to the complicated structure of resonances we face an order reduction down to $3/2$ in the higher-dimensions setting $d > 1$, see Section~\ref{sect:conv-dd} and in particular Theorem~\ref{thmd-a} for the precise convergence analysis.

Note that our analysis is focused on periodic Schr\"odinger equations \eqref{nls} posed on the $d$-dimensional torus $\mathbb{T}^d$. It is an interesting future research goal to develop and analyse low regularity integrators also on general bounded domains $\Omega \subset \mathbb{R}^d$ using different spatial discretization techniques such as finite differences, finite elements, finite volume or discontinuous Galerkin methods. \black

The paper is organized as follows. The first part is devoted to the construction of the Fourier integrator in one space dimension. In Section~\ref{sect:const-1d}, we illustrate the main ideas and study in particular the local error of the method by analysing each of the approximations carried out. The convergence results in dimension $d=1$ are given in Section~\ref{sect:conv-1d}. This section also contains the necessary stability estimates. The extension of our Fourier integrator to higher dimensions is given in Section~\ref{sect:const-dd}, its convergence properties are summarized in Section~\ref{sect:conv-dd}. We conclude in Section~\ref{sect:num} with some numerical examples that illustrate our convergence results and the performance of the new integrator. We conclude in Section~\ref{sect:conclusions}.

We close this section with some remarks. Let $H^r= H^r(\mathbb{T}^d)$ denote the classic Sobolev space of $H^r$ functions on the $d$ dimensional torus $\mathbb T^d$. Its norm $\Vert \cdot \Vert_r$ is defined for $v(\mathbf x) = \sum_{\mathbf k\in\Z^d} \hat v_{\mathbf k} \ee^{i\mathbf k\cdot\mathbf x}$ by
$$
\|v\|_r^2 = \sum_{\mathbf k\in\Z^d} (1+|\mathbf k|)^{2r} |\hat v_{\mathbf k}|^2,
$$
where
$$
\hat v_{\mathbf k} = \frac{1}{(2\pi)^d} \int_{\mathbb{T}^d} \mathrm{e}^{-i \mathbf k \cdot \mathbf x} v(\mathbf x) \mathrm{d} \mathbf x
$$
for $\mathbf k \in \mathbb{Z}^d$ denote the Fourier coefficients associated with $v$. Here we have set
$$
\mathbf k \cdot \mathbf x = k_1 x_1 + \ldots + k_d x_d \quad \text{and} \quad \vert \mathbf k\vert^2 = k_1^2 + \ldots + k_d^2.
$$
\black
Throughout the paper, we will exploit the well-known bilinear estimate
\begin{equation}\label{bil}
\Vert f g \Vert_r \leq c_{r,d} \Vert f \Vert_r \Vert g \Vert_r,
\end{equation}
which holds for $r>d/2$ and some constant $c_{r,d}>0$, and we will make frequent use of the isometric property of the free Schr\"odinger group $\ee^{i t \Delta}$
\begin{equation}\label{iso}
\Vert \ee^{i t \Delta}f \Vert_r = \Vert f \Vert_r
\end{equation}
for all $f \in H^r$ and $t \in \R$. Unless otherwise stated, $c$ denotes a generic constant that is allowed to depend on $d$ and $r$.

\section{Construction of the scheme in one dimension $(d=1)$}\label{sect:const-1d}

To illustrate the ideas in the construction and analysis of the Fourier integrator we first focus on one spatial dimension. For $f \in L^2(\mathbb{T})$ we will denote its Fourier expansion by $f(x) = \sum_{k\in \Z} \hat{f}_k \ee^{i k x}$. Furthermore, we  define a regularization of $\partial_x^{-1}$ through its action in Fourier space by
\begin{subequations}\label{conti}
\begin{equation}\label{conti-a}
(\partial_x^{-1})_k  =
\left\{
\begin{array}{ll}
(ik)^{-1} &\mbox{if} \quad k \neq 0\\
0 & \mbox{if}\quad k = 0
\end{array}
\right.,  \text{ i.e.,} \quad\partial_x^{-1} f(x) =\sum_{k\in \Z\setminus \{ 0\} } (ik)^{-1} \hat{f}_k \ee^{ikx},
\end{equation}
and consequently
\begin{equation}\label{conti-b}
\left(\frac{\ee^{i t \partial_x^2} - 1}{i t \partial_x^2} \right)_{k\neq 0}  =
\frac{\ee^{-i t k^2} - 1}{- i t k^2}, \qquad \left(\frac{\ee^{i t \partial_x^2} - 1}{i t \partial_x^2} \right)_{k=0} = 1.
\end{equation}
\end{subequations}

In the remainder of this section we assume that $r>1/2$ such that the bilinear estimate \eqref{bil} holds. In the construction of our numerical scheme we will frequently employ the following estimate.
\begin{lemma}\label{lem:xp-est}
For all $\beta \in \R$ and $0\le \gamma \le 1$ it holds that $\left|\ee^{i \beta} - 1\right|\le 2^{1-\gamma}\left|\beta \right|^\gamma$.
\end{lemma}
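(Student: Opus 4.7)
The plan is to reduce the inequality to two well-known endpoint bounds and interpolate between them multiplicatively.

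First I would establish the two endpoint cases. For $\gamma = 0$ the claim reads $|\ee^{i\beta} - 1| \le 2$, which is immediate from the triangle inequality together with $|\ee^{i\beta}| = 1$. For $\gamma = 1$ the claim reads $|\ee^{i\beta} - 1| \le |\beta|$, which follows from the identity $|\ee^{i\beta} - 1| = 2|\sin(\beta/2)|$ combined with $|\sin x| \le |x|$ (alternatively, from writing $\ee^{i\beta} - 1 = i\int_0^\beta \ee^{is}\,\dd s$ and taking absolute values inside the integral).

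Next I would handle the intermediate case $0 < \gamma < 1$ by splitting the factor $|\ee^{i\beta}-1|$ as
\begin{equation*}
|\ee^{i\beta} - 1| = |\ee^{i\beta} - 1|^{\gamma} \cdot |\ee^{i\beta} - 1|^{1-\gamma},
\end{equation*}
applying the bound $|\ee^{i\beta}-1| \le |\beta|$ to the first factor (raised to the power $\gamma$) and the bound $|\ee^{i\beta}-1| \le 2$ to the second factor (raised to the power $1-\gamma$). Both inequalities survive under these nonnegative powers since their bases are nonnegative, and multiplying the results gives exactly $2^{1-\gamma}|\beta|^\gamma$.

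There is essentially no obstacle here; the only thing to watch is that the two endpoint estimates hold for every $\beta \in \R$ (so the interpolation is valid uniformly in $\beta$) and that raising to fractional powers preserves the inequalities, which is automatic for nonnegative reals. This simple lemma will be used later to trade a full derivative for a fractional derivative inside oscillatory factors of the form $\ee^{i\tau\,\text{(resonance)}} - 1$, which is precisely where the reduced regularity assumptions in the convergence analysis will come from.
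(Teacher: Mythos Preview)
Your proof is correct and matches the paper's approach exactly: the paper also establishes the two endpoint bounds $|\ee^{i\beta}-1|\le 2$ and $|\ee^{i\beta}-1|\le|\beta|$ (the latter via the integral representation $\ee^{i\beta}-1=\int_0^\beta i\ee^{is}\,\dd s$) and then combines them with exponents $1-\gamma$ and $\gamma$, which is precisely your multiplicative interpolation.
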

\begin{proof}
The assertion follows at once from a convex combination of the following estimates
$$
\left|\ee^{i \beta} - 1\right|\le 2, \qquad\quad \left|\ee^{i \beta} - 1\right| \le \left|\int_0^\beta i\ee^{is} \dd s\right| \le \left|\beta \right|,
$$
which both hold for all $\beta\in\R$.
\end{proof}
In the construction of our numerical scheme, we will employ the following expansion.
\begin{lemma}\label{lem:v-order0}
For all $0\le \gamma \le 1$ and $v\in H^{r+2\gamma}$ it holds
\begin{equation}\label{expR}
\ee^{\pm i s \partial_x^2} v = v + R \quad\text{with} \quad \|R\| \leq 2^{1-\gamma} \vert s \vert^\gamma  \Vert v \Vert_{r+2\gamma}.
\end{equation}
\end{lemma}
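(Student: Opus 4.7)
The plan is to transfer the estimate into Fourier space and reduce to Lemma~\ref{lem:xp-est}. Writing $R = (\ee^{\pm i s \partial_x^2} - 1)v$, the Fourier coefficients satisfy $\hat R_k = (\ee^{\mp i s k^2}-1)\,\hat v_k$ for each $k\in\Z$.

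Next, I would apply Lemma~\ref{lem:xp-est} with $\beta = \mp s k^2$ to obtain the pointwise bound $|\ee^{\mp i s k^2}-1|\le 2^{1-\gamma}|s|^\gamma |k|^{2\gamma}$, and hence $|\hat R_k|\le 2^{1-\gamma}|s|^\gamma |k|^{2\gamma}|\hat v_k|$. Substituting into the definition of the $H^r$-norm and using $|k|^{2\gamma}\le (1+|k|)^{2\gamma}$ gives
\begin{equation*}
\|R\|_r^2 \;\le\; 2^{2(1-\gamma)}|s|^{2\gamma} \sum_{k\in\Z} (1+|k|)^{2r} |k|^{4\gamma} |\hat v_k|^2 \;\le\; 2^{2(1-\gamma)}|s|^{2\gamma}\, \|v\|_{r+2\gamma}^2,
\end{equation*}
and taking square roots yields the claim.

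There is really no obstacle here: the proof is a one-line computation that simply packages Lemma~\ref{lem:xp-est} together with the Plancherel-type definition of $\|\cdot\|_r$. The only point worth being careful about is that the lemma allows $\gamma=0$ (in which case the bound reduces to the trivial $\|R\|_r \le 2\|v\|_r$, consistent with the isometry \eqref{iso}) and $\gamma=1$ (where it gives the standard Taylor-type bound $\|R\|_r \le |s|\,\|v\|_{r+2}$), so the statement is simply a convex interpolation between these two endpoint estimates, mirroring the interpolation already built into Lemma~\ref{lem:xp-est}.
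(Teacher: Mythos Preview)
Your proof is correct and follows essentially the same approach as the paper: both pass to Fourier coefficients, apply Lemma~\ref{lem:xp-est} with $\beta=\mp s k^{2}$ to bound $|\ee^{\mp i s k^{2}}-1|$, and then sum up using the definition of $\|\cdot\|_{r}$. The paper merely displays the intermediate Fourier expansion of $R$ explicitly, whereas you go straight to the coefficient estimate, but the argument is identical.
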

\begin{proof}
The identity
\begin{equation*}
\ee^{i \beta} = 1 + \vert \beta \vert^\gamma  \frac{ \ee^{i \beta} - 1}{\vert \beta \vert^\gamma}
\end{equation*}
allows us to expand terms of type  $\ee^{i s \partial_x^2}v$ as
\begin{equation*}
\ee^{\pm i s \partial_x^2} v = v + \vert s\vert^\gamma \!\!\sum_{k \in \Z\setminus\{0\}}
\frac{ \ee^{\mp i s k^2} - 1}{ \vert s \vert^\gamma \vert  k \vert^{2\gamma}} \vert  k\vert^{2\gamma}\hat{v}_k \,\ee^{i k x}.
\end{equation*}
The sum is readily estimated with the help of Lemma~\ref{lem:xp-est}.
\end{proof}

Recall that the mild solution in one space dimension reads
\begin{equation}\label{vsol}
\begin{aligned}
v(t_n+\tau) = v(t_n) - i \mu  \ee^{\m i t_n \partial_x^2} \int_0^\tau \ee^{\m i s \partial_x^2} \Big[ \bigl\vert  \ee^{\p i (t_n+s) \partial_x^2} v(t_n+s)\bigr\vert^2  \ee^{\p i (t_n+s) \partial_x^2} v(t_n+s) \Big]\dd s.
\end{aligned}
\end{equation}
The purpose of this section is to derive a globally second-order approximation to $v(t_n+\tau)$. For this aim we need to approximate the integrand in the appearing integral up to a local error of order $s^2$, which requires a first-order approximation of $v(t_n+s)$. Such an approximation will be derived in Section~\ref{subsec:first-order}.

To simplify the presentation we will henceforth use the notation
\begin{equation}\label{vast}
\Vs = \ee^{i t_n \partial_x^2} v(t_n+s),
\end{equation}
and we will denote remainder terms as follows.
\begin{definition}
Let $R=R(v,t,s)$ be a term that depends on the function values $v(t+\xi)$ for $0\le\xi\le s$. We say that $R$ is in the class of remainders $\mathcal R_\beta(s^\alpha)$ if and only if the bound
\begin{equation}\label{eq:Rest}
\|R(v,t,s)\|_r \le C s^\alpha
\end{equation}
holds with a constant $C$ that only depends on $d, r, \mu$, and $\sup_{0\le\xi\le s}\|v(t+\xi)\|_{r+\beta}$.
\end{definition}
Instead of $f=g+R$ with $R\in\mathcal R_\beta(s^\alpha)$ we will also write $f=g+\mathcal R_\beta(s^\alpha)$ for short.

\subsection{A first-order approximation of $v(t_n+s)$}\label{subsec:first-order}

As already explained above, we require a first-order approximation to $v(t_n+s)$. Such an approximation was already derived in our previous work \cite{OSch2017} using initial data in $H^{r+1}$. For our second-order scheme, however, we will finally employ $H^{r+2}$ initial data. Therefore, there is more freedom in getting a simpler approximation. This will be carried out next.

Employing the notation \eqref{vast}, the mild solution \eqref{vsol} at time $t_n+s$ reads
\begin{equation}\label{vs}
\begin{aligned}
v(t_n+s) = v(t_n) - i \mu \ee^{\m i t_n \partial_x^2} \int_0^s \ee^{\m i \xi \partial_x^2} \Big[ \big\vert  \ee^{\p i \xi \partial_x^2} \Vx \big\vert^{2}  \ee^{\p i \xi \partial_x^2} \Vx \Big]\dd \xi.
\end{aligned}
\end{equation}
Using the identity
\begin{align*}
\vert v_1 \vert^2 v_1 =\left (\overline{v}_1-\overline{v}_2\right) v_1^2 + \overline{v}_2 \left(v_1-v_2\right) \left(v_1 + v_2\right) + \vert v_2 \vert^2 v_2
\end{align*}
with $v_1 = \ee^{\p i \xi \partial_x^2} \Vx$ and $v_2 = \ee^{\p i \xi \partial_x^2} \V$ in the integral in \eqref{vs} allows us to express $v(t_n+s)$ as follows:
\begin{align}\label{vex1}
v(t_n+s)  = v(t_n) -i\mu  \ee^{\m i t_n \partial_x^2} J^s\big(\V\big) + R_1(v,t_n,s)
\end{align}
with  the integral
\begin{equation}\label{I1}
J^s(w)  = \int_0^s \ee^{\m i \xi  \partial_x^2} \Big[ \big\vert  \ee^{\p i \xi \partial_x^2} w \big\vert^{2}  \ee^{\p i \xi \partial_x^2} w \Big]\dd \xi
\end{equation}
and the remainder $R_1(v,t_n,s)$  given by
\begin{equation}\label{r1}
\begin{aligned}
R_1(v,t_n,s) =  - i \mu   \ee^{\m i t_n \partial_x^2}  \int_0^s \ee^{\m i \xi \partial_x^2} \Big[\left(\ee^{\m i \xi \partial_x^2} \big(\overline \Vx -\overline{\V} \big) \right)  \left(\ee^{\p i\xi \partial_x^2} \Vx \right)^2\Big]\dd \xi\qquad\\
- i \mu  \ee^{\m i t_n \partial_x^2}   \int_0^s \ee^{\m i \xi  \partial_x^2} \Big[\left(\ee^{\m i \xi  \partial_x^2} \overline \V \right)  \left(\ee^{\p i \xi  \partial_x^2} \big(\Vx -\V \big)\right) \left(\ee^{\p i \xi \partial_x^2} \big(\Vx +\V \big)\right)\Big]\dd \xi.
\end{aligned}
\end{equation}
The following lemma shows that the remainder is of second order in $s$.

\begin{lemma}\label{lem:bound_R2}
The remainder term $R_1(v,t_n,s)$ in \eqref{vex1} belongs to class $\mathcal R_0(s^2)$. In particular, it satisfies the bound
\begin{equation*}
\|R_1(v,t_n,s)\|_r \le  c s^2 \mu^2 \sup_{0\le\xi\le s}\|v(t_n+\xi)\|_r^5
\end{equation*}
with a constant $c$ that only depends on $c_{r,d}$.
\end{lemma}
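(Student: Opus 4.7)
The proof strategy is to exploit that each of the two summands making up $R_1(v,t_n,s)$ contains a factor of the form $\ee^{\mp i \xi \partial_x^2}\bigl(\Vx - \V\bigr)$ (or its conjugate), which vanishes linearly in $\xi$, while the outer integration over $[0,s]$ contributes another factor of $s$. All remaining factors can be controlled uniformly by $\sup_{0\le\xi\le s}\|v(t_n+\xi)\|_r$ using isometry and the bilinear estimate.

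First I would establish a linear bound on the increment $\Vx - \V$. Subtracting $\V = \ee^{i t_n \partial_x^2} v(t_n)$ from $\Vx = \ee^{i t_n \partial_x^2} v(t_n+\xi)$ and inserting the mild formulation \eqref{vs} gives
\begin{equation*}
\Vx - \V = -i\mu \int_0^\xi \ee^{\m i \eta \partial_x^2}\Big[\bigl|\ee^{\p i \eta \partial_x^2} V_n(\eta)\bigr|^2 \ee^{\p i \eta \partial_x^2} V_n(\eta)\Big]\dd\eta.
\end{equation*}
The isometry property \eqref{iso} of $\ee^{\pm i \eta \partial_x^2}$ in $H^r$ combined with two applications of the bilinear estimate \eqref{bil} (valid since $r>1/2$) bounds the $H^r$-norm of the integrand pointwise by $c_{r,1}^2 \sup_{0\le\eta\le\xi}\|V_n(\eta)\|_r^3 = c_{r,1}^2 \sup_{0\le\eta\le\xi}\|v(t_n+\eta)\|_r^3$. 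Integrating over $[0,\xi]$ yields
\begin{equation*}
\|\Vx - \V\|_r \le c\,|\mu|\,\xi \sup_{0\le\eta\le s}\|v(t_n+\eta)\|_r^3.
\end{equation*}

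Next, I would apply \eqref{iso} and \eqref{bil} directly to the two summands of \eqref{r1}. For the first summand the $H^r$-norm of the integrand is bounded by
\begin{equation*}
c\,\|\Vx - \V\|_r\,\|\Vx\|_r^2 \le c\,|\mu|\,\xi \sup_{0\le\eta\le s}\|v(t_n+\eta)\|_r^5,
\end{equation*}
where the preceding estimate is inserted. The second summand is treated analogously, after noting that $\|\V\|_r\,\|\Vx+\V\|_r \le 2\sup_{0\le\eta\le s}\|v(t_n+\eta)\|_r^2$. Integrating over $\xi\in[0,s]$ and collecting the two factors of $\mu$ (one explicit in front of $R_1$, one hidden inside $\Vx-\V$) produces the claimed bound of order $s^2\mu^2 \sup\|v\|_r^5$, with a constant depending only on $c_{r,1}$ (hence only on $c_{r,d}$ with $d=1$).

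The argument is essentially routine; the only point needing care is that the condition $r>1/2$ is required for the bilinear estimate to apply throughout. Note also that no additional derivatives of $v$ are consumed in the analysis, which is precisely the reason $R_1$ belongs to the class $\mathcal R_0(s^2)$ and not to a higher-regularity class.
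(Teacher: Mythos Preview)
Your proof is correct and follows essentially the same approach as the paper: you first establish the linear-in-$\xi$ bound on $\Vx-\V$ from the mild formulation using \eqref{bil} and \eqref{iso}, and then feed this into the two summands of \eqref{r1} to pick up the extra factor of $s$ from the outer integration. The paper's argument is identical in structure and in the estimates used.
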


\begin{proof}
Starting from the representation in \eqref{vsol} we readily obtain by employing the bilinear estimate \eqref{bil} together with the isometric property \eqref{iso} that
\begin{equation}\label{vb1}
\Vert \Vx - \V \Vert_r = \Vert v(t_n+\xi) - v(t_n) \Vert_r  \leq  c \xi \vert \mu \vert \sup_{0 \leq \eta \leq \xi} \Vert v(t_n+\eta)\Vert_r^3.
\end{equation}
Plugging \eqref{vb1} into \eqref{r1} we thus obtain  (using again \eqref{bil} and \eqref{iso}) the second-order error bound
\begin{equation}\label{r1bound}
\begin{aligned}
\Vert R_1(v,t_n,s)  \Vert_r & \leq c\vert \mu\vert \sup_{0 \leq \xi \leq s} \Vert v(t_n+\xi)\Vert_r^2 \int_0^s  \Vert v(t_n+\eta)-v(t_n)\Vert_r  \dd \eta \\
 & \leq c s^2 \mu^2 \sup_{0 \leq \xi \leq s} \Vert v(t_n+\xi)\Vert_r^5
\end{aligned}
\end{equation}
as desired.
\end{proof}

We still have to find an appropriate approximation to the integral $J^s\bigl(\V\bigr)$. The expansion~\eqref{expR} (together with \eqref{bil} and \eqref{iso}) implies that
\begin{equation}\label{expIs}
\begin{aligned}
J^s\bigl(\V\bigr) & = \int_0^s   \Big(\left \vert \V\right\vert^2 \V + \mathcal R_{2\gamma}(s^\gamma)\Big) \dd \xi\\
& = s \left \vert \V\right\vert^2 \V + \mathcal R_{2\gamma}(s^{1+\gamma}).
\end{aligned}
\end{equation}

Plugging this expansion into \eqref{vex1} proves the following result.

\begin{lemma}\label{lem:order-1}
For $0 \leq \gamma \leq 1$ and $v\in H^{r+2\gamma}$ it holds that
\begin{equation*}
v(t_n+s) = v(t_n) - i \mu s \,\ee^{-i t_n \partial_x^2}\left( \left \vert \V \right\vert^2 \V\right) + \mathcal R_{2\gamma}(s^{1+\gamma}).
\end{equation*}
\end{lemma}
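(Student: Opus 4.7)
The plan is to assemble the result by plugging the expansion \eqref{expIs} of $J^s(\V)$ into the representation \eqref{vex1}, and then absorbing everything that is not explicit into the remainder class $\mathcal R_{2\gamma}(s^{1+\gamma})$. Concretely, I would write
$$
v(t_n+s) = v(t_n) - i\mu \,\ee^{-i t_n \partial_x^2} J^s\bigl(\V\bigr) + R_1(v,t_n,s),
$$
and replace $J^s(\V)$ by $s\,|\V|^2\V + \mathcal R_{2\gamma}(s^{1+\gamma})$ using \eqref{expIs}, which has already been justified via \eqref{expR}, the bilinear estimate \eqref{bil}, and the isometry \eqref{iso}.

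Next I would dispose of the two error contributions. For the term $\ee^{-i t_n \partial_x^2}\mathcal R_{2\gamma}(s^{1+\gamma})$, the isometry \eqref{iso} shows that $\ee^{-i t_n \partial_x^2}$ preserves the $H^r$ norm and therefore preserves membership in the remainder class. For the term $R_1(v,t_n,s)$, Lemma~\ref{lem:bound_R2} already provides $R_1 \in \mathcal R_0(s^2)$. To fit this into $\mathcal R_{2\gamma}(s^{1+\gamma})$, I would note that $\mathcal R_0(\cdot)\subset \mathcal R_{2\gamma}(\cdot)$ trivially, and that for $0\le\gamma\le 1$ and $s$ restricted to some fixed interval $[0,s_0]$ we have $s^2 = s^{1+\gamma}\, s^{1-\gamma}\le s_0^{1-\gamma}\, s^{1+\gamma}$; the factor $s_0^{1-\gamma}$ is absorbed into the generic constant. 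Collecting both pieces yields a single remainder in $\mathcal R_{2\gamma}(s^{1+\gamma})$, which is precisely the claim.

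There is no genuine analytical obstacle here: the lemma is a bookkeeping step that fuses Lemma~\ref{lem:bound_R2} and the expansion \eqref{expIs}. The only point that deserves care is the remainder calculus — verifying that the class $\mathcal R_{2\gamma}(s^{1+\gamma})$ is closed under the isometry $\ee^{-i t_n\partial_x^2}$ and that the stronger bound $\mathcal R_0(s^2)$ can be downgraded to $\mathcal R_{2\gamma}(s^{1+\gamma})$ on a bounded time interval. Once this is acknowledged, the identity follows immediately by substitution.
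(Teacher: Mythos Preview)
Your proposal is correct and follows exactly the paper's approach: the paper's proof is a single sentence (``Plugging this expansion into \eqref{vex1} proves the following result''), and you have simply spelled out the remainder bookkeeping that this substitution entails. The points you flag --- isometry of $\ee^{-i t_n\partial_x^2}$ preserving the class, and the inclusion $\mathcal R_0(s^2)\subset\mathcal R_{2\gamma}(s^{1+\gamma})$ on a bounded interval --- are precisely the routine checks implicit in the paper's one-line argument.
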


As a consequence, we obtain for $0 \leq \gamma \leq 1$ that
\begin{equation}\label{vex2}
\Vs = \V \black - i \mu s \left \vert \V \right\vert^2 \V +  \mathcal R_{2\gamma}(s^{1+\gamma}),
\end{equation}
where we used once more the isometric property \eqref{iso}.

\subsection{A second-order approximation of $v(t_n+\tau)$}\label{sec:sec-order}

Next we insert the expansion \eqref{vex2} into the mild solution $v(t_n+\tau)$  given in \eqref{vs}. \black For $0\le \gamma\le 1$ and $v\in H^{r+2\gamma}$ this yields that
\begin{equation*}
\begin{aligned}
v(t_n+\tau) &= v(t_n) - i \mu  \ee^{\m i t_n \partial_x^2}  \int_0^\tau \ee^{\m i s\partial_x^2} \Big[\left(\ee^{\m i s\partial_x^2} \overline \V \right)  \left( \ee^{\p i s \partial_x^2} \V \right)^2\Big]\dd s \\
&\qquad + \mu^2 \ee^{\m i t_n \partial_x^2} \int_0^\tau s \cdot\ee^{\m i s \partial_x^2} \Big[\left(\ee^{\m i s \partial_x^2} \big(  \vert \V \vert^2 \overline{\V} \big)\right)  \left( \ee^{\p i s \partial_x^2} \V  \right)^2\Big]\dd s \\
&\qquad - 2\mu^2 \ee^{\m i t_n \partial_x^2}  \int_0^\tau s \cdot \ee^{\m i s \partial_x^2} \Big[\left( \ee^{\p i s \partial_x^2} \big(\vert \V \vert^2 \V \big)\right)\left|\ee^{i s \partial_x^2}  \V \right|^2 \Big]\dd s  + \mathcal R_{2\gamma}(\tau^{2+\gamma}).
\end{aligned}
\end{equation*}

Employing the expansion given in \eqref{expR} in the second and third integral (which are of order $\tau^2$) furthermore implies that
\begin{equation}
\begin{aligned}\label{vpre}
v(t_n+\tau) &= v(t_n) - i \mu  \ee^{\m i t_n \partial_x^2}  \int_0^\tau \ee^{\m i s\partial_x^2} \Big[\left(\ee^{\m i s\partial_x^2} \overline \V \right)  \left( \ee^{\p i s \partial_x^2} \V \right)^2\Big]\dd s \\
&\qquad - \mu^2  \ee^{- i t_n \partial_x^2}\int_0^\tau s \,\left \vert \V \right \vert^4 \V\,\dd s + \mathcal R_{2\gamma}(\tau^{2+\gamma})\\
&= v(t_n) - i \mu \ee^{- i t_n \partial_x^2}  J^\tau_x(\V)
- \mu^2 \frac{\tau^2}{2}\ee^{- i t_n \partial_x^2} \left(\left \vert \V \right \vert^4 \V
\right) + \mathcal R_{2\gamma}(\tau^{2+\gamma})
\end{aligned}
\end{equation}
with
\begin{equation*}
J^\tau_x\bigl(\V\bigr) = \int_0^\tau \ee^{\m i s\partial_x^2} \Big[\left(\ee^{\m i s\partial_x^2} \overline \V \right)  \left( \ee^{\p i s \partial_x^2} \V \right)^2\Big]\dd s.
\end{equation*}

We still have to find a refined approximation to the integral $J^\tau_x\bigl(\V\bigr)$. \black In a first step we express $J^\tau_x (v)$ for $v\in H^{r+2\gamma}$ by its Fourier series:\black
\begin{equation}\label{J}
J^\tau_x(v) = \int_0^\tau \sum_{k_1,k_2,k_3 \in \Z}  \ee^{i s \big[ \left(k_1+k_2+k_3\right)^2 + k_1^2- \left(k_2^2+k_3^2\right) \big] } \hat{\overline{v}}_{k_1}\hat v_{k_2} \hat v_{k_3} \ee^{i (k_1+k_2+k_3) x} \dd s.
\end{equation}
In order to obtain a \emph{numerically stable} second-order approximation of the integral $J^\tau_x(v)$, which in particular \emph{does not contain explicit derivatives}, we need to find an appropriate approximation to the exponential
\begin{equation}\label{expi}
\ee^{i s \big[ \left(k_1+k_2+k_3\right)^2 + k_1^2- \left(k_2^2+k_3^2\right) \big] }.
\end{equation}
For this aim, we will employ the following decomposition.
\begin{lemma}\label{mani}
For all $\alpha, \beta \in \R$ we have that
\begin{align*}\label{mani}
\ee^{i (\alpha + \beta)} = \ee^{i \alpha } + \ee^{i \beta} -1 + R(\alpha,\beta) \qquad \text{with} \quad \left \vert R(\alpha,\beta)\right\vert \leq 2^{2-\delta-\theta} \vert \alpha\vert^\delta \vert \beta \vert^\theta \quad \text{for} \quad 0\leq \delta, \theta \leq 1.
\end{align*}
\end{lemma}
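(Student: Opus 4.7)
The identity to prove rearranges as
$$
R(\alpha,\beta) = \ee^{i(\alpha+\beta)} - \ee^{i\alpha} - \ee^{i\beta} + 1,
$$
so the plan is simply to recognize the algebraic factorization
$$
\ee^{i(\alpha+\beta)} - \ee^{i\alpha} - \ee^{i\beta} + 1 = \bigl(\ee^{i\alpha}-1\bigr)\bigl(\ee^{i\beta}-1\bigr),
$$
which is immediate from expanding the right-hand side (using $\ee^{i\alpha}\ee^{i\beta} = \ee^{i(\alpha+\beta)}$). Once this observation is made, the rest is just bookkeeping.

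Next, I would apply Lemma~\ref{lem:xp-est} to each of the two factors separately: for $0\le\delta\le 1$ it yields $|\ee^{i\alpha}-1|\le 2^{1-\delta}|\alpha|^\delta$, and analogously $|\ee^{i\beta}-1|\le 2^{1-\theta}|\beta|^\theta$ for $0\le\theta\le 1$. Taking the product of these two scalar inequalities gives
$$
|R(\alpha,\beta)| = \bigl|\ee^{i\alpha}-1\bigr|\,\bigl|\ee^{i\beta}-1\bigr| \le 2^{1-\delta}\cdot 2^{1-\theta}\,|\alpha|^\delta|\beta|^\theta = 2^{2-\delta-\theta}\,|\alpha|^\delta|\beta|^\theta,
$$
which is exactly the claimed bound.

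There is no real obstacle here: the only mildly nonobvious step is spotting the factorization of the four-term expression, after which the estimate reduces to applying the already established Lemma~\ref{lem:xp-est} twice. The strength of the lemma comes from the fact that the two exponents $\delta$ and $\theta$ can be chosen \emph{independently} in $[0,1]$, which will later allow one to split a resonance phase of the form $\alpha+\beta$ into two factors and distribute regularity between them in a flexible way.
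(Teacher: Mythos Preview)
Your proof is correct and essentially identical to the paper's: the paper writes the same factorization as
\[
\ee^{i(\alpha+\beta)} = |\alpha|^\delta|\beta|^\theta\,\frac{\ee^{i\alpha}-1}{|\alpha|^\delta}\,\frac{\ee^{i\beta}-1}{|\beta|^\theta} + \ee^{i\alpha} + \ee^{i\beta} - 1
\]
and then invokes Lemma~\ref{lem:xp-est} on each quotient, which is exactly your argument in slightly different notation.
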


\begin{proof}
The assertion follows from
\begin{equation*}
\begin{aligned}
\ee^{i (\alpha + \beta)}
& =  \vert \alpha\vert^\delta \vert \beta\vert^\theta \frac{ \big(\ee^{i \alpha } -1\big)}{\vert \alpha\vert^\delta} \frac{\big(  \ee^{i \beta } -1\big)}{\vert \beta \vert^\theta}
+ \ee^{i \alpha} + \ee^{i \beta} - 1
\end{aligned}
\end{equation*}
together with Lemma~\ref{lem:xp-est}.
\end{proof}
The \emph{key relation}
\begin{equation}\label{key}
\left(k_1+k_2+k_3\right)^2 + k_1^2- \left(k_2^2+k_3^2\right) = 2 k_1 \left(k_1+k_2+k_3\right) + 2k_2k_3
\end{equation}
now allows us to write the exponential \eqref{expi} as follows
\begin{equation}\label{expE}
\begin{aligned}
\ee^{i s \big[ \left(k_1+k_2+k_3\right)^2 + k_1^2- \left(k_2^2+k_3^2\right) \big] }
 & = \ee^{ 2 i s k_1 \left(k_1+k_2+k_3\right)} + \ee^{ 2 i s k_2 k_3} - 1 +  R_2^s( k_1,k_2,k_3)
\end{aligned}
\end{equation}
with the remainder
\[
R_2^s( k_1,k_2,k_3) = \left(  \ee^{ 2 i s k_1(k_1+k_2+k_3)} -1 \right) \left(  \ee^{ 2 i s k_2 k_3}- 1 \right).
\]
Thanks to Lemma \ref{mani} (with $\delta = \theta = \gamma$) the remainder satisfies the following bound
\begin{equation}\label{BRk3}
\begin{aligned}
\left \vert R_2^s( k_1,k_2,k_3) \right \vert
& \leq 4^{1-\gamma} s^{2 \gamma} \left\vert  2k_1 (k_1+k_2+k_3) \right \vert^\gamma \left\vert 2 k_2 k_3 \right \vert^\gamma \\
& \leq  2\, s^{2\gamma} \sum_{\substack{\ell,j,m \in \{1,2,3\}\\\ell\neq j\neq m}} \vert k_\ell^2  k_j  k_m\vert^\gamma
\end{aligned}
\end{equation}
for $0\le\gamma\le 1$. Plugging the expansion \eqref{expE} into \eqref{J} yields that
\begin{equation}\label{J1}
J^\tau_x(v) = \sum_{k\in\Z}\sum_{\substack{k_1,k_2,k_3 \in \Z\\k_1+k_2+k_3 = k}} \hat{\overline{v}}_{k_1}\hat v_{k_2} \hat v_{k_3}
\ee^{i k x}  \int_0^\tau \left(\ee^{ 2 i s k_1 k} + \ee^{ 2 i s k_2 k_3} - 1\right)\dd s + R_3^\tau(v),
\end{equation}
with the remainder
\begin{equation*}
R_3^\tau(v) = \sum_{k_1,k_2,k_3 \in \Z}  \int_0^\tau R_2^s( k_1,k_2,k_3) \,\hat{\overline{v}}_{k_1}\hat v_{k_2} \hat v_{k_3} \ee^{i (k_1+k_2+k_3) x}\,\dd s.
\end{equation*}

Next, we give a bound on this term.

\begin{lemma}\label{lem:int-rem}
The remainder term $R_3^\tau(v)$ in \eqref{J1} belongs to class $\mathcal R_{2\gamma}(\tau^{1+2\gamma})$.
\end{lemma}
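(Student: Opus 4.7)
The plan is to bound $\|R_3^\tau(v)\|_r$ directly in Fourier space, using the pointwise bound on $R_2^s(k_1,k_2,k_3)$ from \eqref{BRk3} combined with the bilinear (trilinear) estimate \eqref{bil}. First I would pass to Fourier coefficients. For each $K\in\Z$, the $K$-th Fourier coefficient of $R_3^\tau(v)$ is a convolution-type sum over the hyperplane $k_1+k_2+k_3=K$, so taking absolute values and integrating in $s$ yields
\[
\bigl|\widehat{R_3^\tau(v)}_K\bigr|\le \frac{2\tau^{1+2\gamma}}{1+2\gamma}\sum_{\substack{\ell,j,m\in\{1,2,3\}\\\ell\neq j\neq m}}\sum_{k_1+k_2+k_3=K} |k_\ell|^{2\gamma}|k_j|^\gamma|k_m|^\gamma\,|\hat v_{k_1}||\hat v_{k_2}||\hat v_{k_3}|,
\]
where I have used $|\hat{\overline v}_{k}|=|\hat v_{-k}|$ so that the conjugation plays no role once absolute values are taken.

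Next I would reinterpret each of the six inner sums as the $K$-th Fourier coefficient of a triple product. For the representative term with $\ell=1$, $j=2$, $m=3$, introduce auxiliary functions $w^{(2\gamma)},\,w^{(\gamma)}\in H^r$ defined by $\widehat{w^{(\alpha)}}_k=|k|^\alpha|\hat v_k|$. Then the corresponding sum equals the $K$-th Fourier coefficient of the product $w^{(2\gamma)}\,w^{(\gamma)}\,w^{(\gamma)}$. Comparing norms using the definition of $\|\cdot\|_r$ gives $\|w^{(\alpha)}\|_r\le \|v\|_{r+\alpha}$, and the other five permutations are handled identically (simply relabelling which of the three factors carries the weight $|k|^{2\gamma}$ versus $|k|^\gamma$).

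At this point I would apply the bilinear estimate \eqref{bil} twice to obtain
\[
\bigl\|w^{(2\gamma)}\,w^{(\gamma)}\,w^{(\gamma)}\bigr\|_r\le c_{r,d}^{\,2}\,\|w^{(2\gamma)}\|_r\,\|w^{(\gamma)}\|_r^2\le c_{r,d}^{\,2}\,\|v\|_{r+2\gamma}\,\|v\|_{r+\gamma}^2\le c_{r,d}^{\,2}\,\|v\|_{r+2\gamma}^3,
\]
so summing the six permutations gives $\|R_3^\tau(v)\|_r\le C\,\tau^{1+2\gamma}\,\|v\|_{r+2\gamma}^3$ with a constant $C$ depending only on $r$ and $d$. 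By the definition of the remainder class, this is exactly the claim $R_3^\tau(v)\in\mathcal R_{2\gamma}(\tau^{1+2\gamma})$.

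The only genuine step here is the passage to the majorant functions $w^{(\alpha)}$ — once the pointwise estimate \eqref{BRk3} is at hand, the Fourier-side sum of moduli $|k_\ell|^{2\gamma}|k_j|^\gamma|k_m|^\gamma$ conveniently distributes the $2\gamma$ regularity across the three factors, keeping us within the bilinear framework and avoiding any derivative landing singularly on a single factor. The main (mild) obstacle is purely bookkeeping: checking that all six permutations in the symmetric sum are of the same type so that the resulting constant $C$ is genuinely uniform.
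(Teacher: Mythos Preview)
Your proof is correct and follows essentially the same route as the paper's: use the pointwise bound \eqref{BRk3}, integrate in $s$ to extract $\tau^{1+2\gamma}$, introduce auxiliary majorant functions with weighted Fourier coefficients (the paper's $g_\gamma$ with $\hat g_{k,\gamma}=(1+|k|)^\gamma|\hat v_k|$ plays the role of your $w^{(\gamma)}$), recognise the resulting convolution sum as the Fourier coefficients of a triple product, and conclude with the bilinear estimate \eqref{bil} to get $\|R_3^\tau(v)\|_r\le c\,\tau^{1+2\gamma}\|v\|_{r+\gamma}^2\|v\|_{r+2\gamma}$. The only differences are cosmetic (your weights $|k|^\alpha$ versus the paper's $(1+|k|)^\alpha$, and your explicit enumeration of the six permutations).
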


\begin{proof}
Thanks to the bound on $R_2^s( k_1,k_2,k_3)$ given in \eqref{BRk3} we obtain that
\begin{equation}\label{r2ga}
\begin{aligned}
\left \Vert R_3(v) \right \Vert_r^2 & \leq  c \sum_{k \in \Z}
\left(1+\vert k \vert\right)^{2r}\Biggl( \sum_{\substack{k_1+k_2+k_3 = k\\k_1,k_2,k_3 \in \Z}}  \int_0^\tau  s^{2\gamma} \dd s\sum_{\substack{\ell,j,m \in \{1,2,3\}\\\ell\neq j\neq m}} \vert k_\ell^2  k_j  k_m\vert^\gamma\vert \hat{v}_{k_\ell}\vert \vert \hat{v}_{k_j}\vert \vert \hat{v}_{k_m}\vert \Biggr)^2
\end{aligned}
\end{equation}
for some constant $c>0$ and $0 \leq \gamma \leq 1$ which implies that the remainder $R_3(v)$ is of order $\tau^{1+2\gamma}$ for sufficiently smooth functions $v$. More precisely, we define the auxiliary function $g_{\gamma}(x) = \sum_{k \in \Z} \hat{g}_{k,\gamma}  \,\ee^{i k x}$ through its Fourier coefficients
\[
\hat{g}_{k,\gamma} = \left(1+\vert k \vert\right)^{\gamma} \left \vert \hat{v}_k \right \vert.
\]
Together with the estimate
\[
\left \vert k_\ell^2 k_j k_m \right \vert \leq \left (1+ \vert k_\ell\vert\right)^2 \left (1 + \vert k_j \vert\right)\left (1+ \vert k_m\vert\right)
\]
this allows us to express the bound on the remainder given in \eqref{r2ga} as follows
\begin{equation*}
\left \Vert R_3(v) \right \Vert_r \leq c \tau^{1+2\gamma}\big \Vert \left(g_\gamma\right)^2 \, g_{2\gamma}\big\Vert_r,
\end{equation*}
where $0 \leq \gamma \leq 1.$ Together with the bilinear estimate \eqref{bil} this yields the error bound
\begin{equation*}
\begin{aligned}
\left \Vert R_3(v) \right \Vert_r & \leq c \tau^{1+2\gamma} \Vert g_\gamma \Vert_r^2 \Vert g_{2\gamma} \Vert_r \\
&= c \tau^{1+2\gamma} \left( \sum_{k \in \Z} \left(1+\vert k \vert\right)^{2(r+\gamma)} \left \vert \hat v_k\right \vert^2\right)
\left( \sum_{k \in \Z} \left(1+\vert k \vert\right)^{2(r+2\gamma)} \left \vert \hat v_k\right \vert^2\right)^{1/2}
\end{aligned}
\end{equation*}
which implies that
\begin{equation}\label{Rfinal2ga}
\left \Vert R_3(v) \right \Vert_r \leq c  \tau^{1+2\gamma}  \Vert v \Vert_{r+\gamma}^2 \Vert v \Vert_{r+2\gamma}, \qquad 0 \leq \gamma \leq 1.
\end{equation}
This is the sought after bound.
\end{proof}

\subsection{Computation of the integrals}

We are left with solving the integral in \eqref{J1}. The last term of the integral is simply seen\black
$$
-\tau \sum_{k_1,k_2,k_3 \in \Z} \hat{\overline{v}}_{k_1}\hat v_{k_2} \hat v_{k_3}\ee^{i(k_1+k_2+k_3) x} =  -\tau \vert v\vert^2 v.
$$
Due to the \emph{structure of resonances} we will treat the other two integrals
\begin{subequations}\label{J1J2}
\begin{align}
J_{1,x}^\tau(v) &= \sum_{k\in\Z}\sum_{\small \substack{k_1,k_2,k_3 \in \Z\\k_1+k_2+k_3 = k}} \hat{\overline{v}}_{k_1}\hat v_{k_2} \hat v_{k_3}\ee^{i kx} \int_0^\tau \ee^{ 2 i s k_1 k } \dd s \label{eq:J1J2a} \\
J_{2,x}^\tau(v) &=  \sum_{k\in\Z}\sum_{\small \substack{k_1,k_2,k_3 \in \Z\\k_1+k_2+k_3 = k}} \hat{\overline{v}}_{k_1}\hat v_{k_2} \hat v_{k_3}\ee^{i kx} \int_0^\tau  \ee^{2 i s k_2 k_3}\dd s \label{eq:J1J2b}
\end{align}
\end{subequations}
separately. To simplify the presentation we will suppress that $k_1,k_2,k_3 \in \Z$ in all sums.

For the first integral \eqref{eq:J1J2a}, we get the following result.

\begin{lemma}\label{lem:int1}
For $v\in H^r$ we have
\begin{equation*}
\begin{aligned}
J_{1,x}^\tau(v) & =  \frac{i}{2} \left[ \ee^{-i \tau \partial_x^2} \partial_x^{-1}
\left(\bigl(\ee^{-i \tau \partial_x^2} \partial_x^{-1} \overline{v}\bigr) \bigl( \ee^{i \tau \partial_x^2} v^2 \bigr)\right) - \partial_x^{-1} \left( \big(\partial_x^{-1}\overline{v}\big) v^2\right)
\right]\\
&\qquad + \tau \hat{\overline{v}}_0 v^2 + \tau \widehat{\left( \vert v \vert^2 v\right)}_0- \tau  \overline{\hat{v}}_0 \widehat{\left( v^2\right)}_0.
\end{aligned}
\end{equation*}
\end{lemma}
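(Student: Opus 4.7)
The plan is to evaluate the Fourier integral defining $J_{1,x}^\tau(v)$ explicitly and then recognise the result as a composition of the operators $\ee^{\pm i\tau \partial_x^2}$ and the regularised inverse $\partial_x^{-1}$ from \eqref{conti-a}. Since $\int_0^\tau \ee^{2isk_1 k}\dd s$ equals $\tau$ when $k_1 k = 0$ and $(\ee^{2i\tau k_1 k}-1)/(2ik_1k)$ otherwise, I would split the triple sum into a non-resonant part (indices with $k \neq 0$ and $k_1 \neq 0$) and a resonant part ($k_1 k = 0$), handling each separately.

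For the non-resonant part the key observation is the algebraic identity
$$
2k_1 k = k_1^2 + k^2 - (k-k_1)^2 = k_1^2 + k^2 - (k_2+k_3)^2,
$$
which produces the factorisation $\ee^{2i\tau k_1 k} = \ee^{i\tau k_1^2}\,\ee^{i\tau k^2}\,\ee^{-i\tau(k_2+k_3)^2}$. Reading these symbols back as Fourier multipliers, the innermost factor $\ee^{i\tau k_1^2}/(ik_1)$ is provided by $\ee^{-i\tau\partial_x^2}\partial_x^{-1}\overline v$, the factor $\ee^{-i\tau(k_2+k_3)^2}$ by $\ee^{i\tau\partial_x^2}v^2$, and the outer $\ee^{i\tau k^2}/(ik)$ by a further application of $\ee^{-i\tau\partial_x^2}\partial_x^{-1}$; altogether this assembles to $\ee^{-i\tau\partial_x^2}\partial_x^{-1}\bigl((\ee^{-i\tau\partial_x^2}\partial_x^{-1}\overline v)(\ee^{i\tau\partial_x^2} v^2)\bigr)$. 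Subtracting the analogous expression without the exponentials, which by the same bookkeeping equals $\partial_x^{-1}((\partial_x^{-1}\overline v)v^2)$, supplies the $-1$ in the numerator, and the prefactor $i/2$ converts $1/((ik)(ik_1))$ into $1/(2ikk_1)$. The restrictions $k\neq 0$ and $k_1 \neq 0$ are enforced automatically by the definition of $\partial_x^{-1}$ in \eqref{conti-a}.

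For the resonant contribution I would split further into the disjoint cases $k_1 = 0$ (any $k$) and $k_1 \neq 0,\ k = 0$. The first case yields $\tau\hat{\overline v}_0 \sum_{k_2,k_3} \hat v_{k_2}\hat v_{k_3}\ee^{i(k_2+k_3)x} = \tau \hat{\overline v}_0 v^2$. The second yields $\tau\sum_{k_1\neq 0,\ k_1+k_2+k_3=0}\hat{\overline v}_{k_1}\hat v_{k_2}\hat v_{k_3}$, which I would rewrite as the zeroth Fourier coefficient of $|v|^2 v = \overline v\cdot v^2$ minus the $k_1 = 0$ contribution $\hat{\overline v}_0 \widehat{(v^2)}_0$; using $\hat{\overline v}_0 = \overline{\hat v}_0$ this equals $\tau\widehat{(|v|^2 v)}_0 - \tau\overline{\hat v}_0\widehat{(v^2)}_0$. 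Summing the non-resonant term with these two resonant pieces reproduces the claimed formula. The only delicate point is pure bookkeeping: making sure the partition of the resonance set into the two disjoint cases is correct so that the three correction terms appear with the right signs.
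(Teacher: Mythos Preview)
Your proposal is correct and follows essentially the same route as the paper: the same resonance set $\{k_1=0\}\cup\{k=0\}$, the same algebraic identity $2k_1k = k^2+k_1^2-(k_2+k_3)^2$ for the non-resonant part, and the same rewriting of the $\{k=0,\ k_1\neq 0\}$ contribution as $\widehat{(|v|^2v)}_0-\hat{\overline v}_0\widehat{(v^2)}_0$. The only cosmetic difference is that the paper states the integrated exponent directly as $\frac{i}{2}\,\frac{\ee^{i\tau[k^2+k_1^2-(k_2+k_3)^2]}-1}{(ik_1)(ik)}$ without first writing the product factorisation, but this is the same computation.
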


\begin{proof} Respecting the resonance set
\[
\{ k_1,k_2,k_3 \in \Z \;;\;  k_1 = 0 \ \text{or} \ k = k_1+k_2+k_3 = 0\}
\]
and using the representation (see \eqref{key})
\begin{equation}\label{eq:observation}
2k_1 k = k^2 + k_1^2- \left(k_2+k_3\right)^2 \quad \text{for} \ k = k_1+k_2+k_3
\end{equation}
we obtain
\begin{equation}\label{J11}
J_{1,x}^\tau(v) =  \sum_{k\in\Z}\sum_{\small\substack{k_1+k_2+k_3 = k \\ k_1\neq 0,\ k \neq 0,}} \hat{\overline{v}}_{k_1}\hat v_{k_2} \hat v_{k_3} \ee^{i kx}\,
 \int_0^\tau \ee^{ 2 i s k_1 k } \dd s + \tau \hat{\overline{v}}_0 v^2 + \tau \sum_{\small\substack{k_1+k_2+k_3 = 0 \\ k_1\neq 0}}\hat{\overline{v}}_{k_1} \hat{v}_{k_2} \hat{v}_{k_3}.
\end{equation}
Computing the above integral and employing \eqref{eq:observation} shows that
$$
\int_0^\tau \ee^{ 2 i s k_1 k } \dd s = \frac{i}{2}\frac{\ee^{ i \tau \left[k^2 + k_1^2- \left(k_2+k_3\right)^2  \right] }-1}{\left(i k_1\right) \left(i k\right)}.
$$
We further employ
$$
\sum_{\small\substack{k_1+k_2+k_3 = 0 \\ k_1\neq 0}}\hat{\overline{v}}_{k_1} \hat{v}_{k_2} \hat{v}_{k_3} =
\sum_{\small\substack{k_1+k_2+k_3 = 0}}\hat{\overline{v}}_{k_1} \hat{v}_{k_2} \hat{v}_{k_3} - \sum_{\small\substack{k_2+k_3 = 0}} \hat{\overline{v}}_0 \hat{v}_{k_2} \hat{v}_{k_3}.
$$
Taking all this together gives at once the desired result.
\end{proof}

For the second integral \eqref{eq:J1J2b}, we get the following result.

\begin{lemma}\label{lem:int2}
For $v\in H^r$ we have
\begin{equation*}
J_{2,x}^\tau(v)  =  \frac{i}{2} \left[ \ee^{-i \tau \partial_x^2} \left(\partial_x^{-1} \ee^{i \tau \partial_x^2} v\right)^2  - \left(\partial_x^{-1} v\right)^2 \right] \overline v + \tau \hat{v}_0\bigl( 2  v - \hat{v}_0\bigr)\overline v .
\end{equation*}
\end{lemma}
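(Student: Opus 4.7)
The approach parallels the proof of Lemma~\ref{lem:int1}. The resonance set of $J_{2,x}^\tau$ — the triples where the phase of $\ee^{2is k_2 k_3}$ vanishes — is $\{k_2 = 0\}\cup\{k_3 = 0\}$. I would first split the triple sum in \eqref{eq:J1J2b} along this dichotomy. On the resonant part the inner integral equals $\tau$; on the non-resonant complement it evaluates to $(\ee^{2i\tau k_2 k_3} - 1)/(2ik_2 k_3)$.

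For the non-resonant contribution the key step is the analogue of \eqref{eq:observation},
$$
2 k_2 k_3 = (k_2+k_3)^2 - k_2^2 - k_3^2,
$$
which factorises the oscillation as
$$
\ee^{2i\tau k_2 k_3} = \ee^{i\tau(k_2+k_3)^2}\,\ee^{-i\tau k_2^2}\,\ee^{-i\tau k_3^2}.
$$
Each inner factor $\ee^{-i\tau k_j^2}/(ik_j)$ is the Fourier symbol of $\partial_x^{-1}\ee^{i\tau \partial_x^2}$ applied to $v$, while the outer factor $\ee^{i\tau(k_2+k_3)^2}$ is the symbol of $\ee^{-i\tau \partial_x^2}$ applied to the resulting product (whose Fourier support lies at $k_2+k_3$, with $k_2,k_3\neq 0$, consistent with the regularisation \eqref{conti-a}). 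The constant $-1$ in the numerator is treated in the same way and yields the companion $(\partial_x^{-1} v)^2$. Multiplication by $\overline v$ supplies the missing factor $\hat{\overline v}_{k_1} \ee^{ik_1 x}$, and matching $1/(2ik_2k_3)$ from the integral against $1/(ik_2)(ik_3)$ from the two copies of $\partial_x^{-1}$ produces the overall prefactor $i/2$.

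For the resonant contribution I would apply inclusion–exclusion to $\{k_2 = 0\}\cup\{k_3 = 0\}$. Each single-index condition, after summing out the remaining two indices (which rebuilds $v$ and $\overline v$), contributes $\tau \hat v_0 v \overline v$, while the intersection $\{k_2 = k_3 = 0\}$ contributes $\tau \hat v_0^2 \overline v$. This yields the resonant total $\tau \hat v_0(2v - \hat v_0)\overline v$. The principal point of caution is bookkeeping: the regularised $\partial_x^{-1}$ in \eqref{conti-a} discards precisely the modes $k_2 = 0$ and $k_3 = 0$, which must coincide with the resonance set that has already been extracted — otherwise a spurious or missing zero mode would appear. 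Verifying this match closes the proof.
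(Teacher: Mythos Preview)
Your proposal is correct and follows essentially the same route as the paper: split off the resonance set $\{k_2=0\}\cup\{k_3=0\}$, use the identity $2k_2k_3=(k_2+k_3)^2-k_2^2-k_3^2$ to recognise the Fourier symbols of $\ee^{-i\tau\partial_x^2}$ and $\partial_x^{-1}\ee^{i\tau\partial_x^2}$ on the non-resonant part, and handle the resonant part by inclusion--exclusion. The paper's only cosmetic difference is that it factors out $\overline v$ from the outset (since the phase is independent of $k_1$), whereas you reinstate it at the end; the bookkeeping is otherwise identical.
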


\begin{proof}
Similarly as above, by respecting the resonance set
\[
\{ k_1,k_2,k_3 \in \Z \;;\;  k_2 = 0 \ \text{or} \ k_3 = 0\}
\]
and using the representation (see \eqref{key})
\[
2k_2 k_3 = \left(k_2+k_3\right)^2 - \left(k_2^2 + k_3^2\right)
\]
we obtain the following expression for $J_2^\tau(v)$
\begin{equation}\label{J12}
\begin{aligned}
J_{2,x}^\tau(v) & = \overline{v} \sum_{\small\substack{k_2,k_3 \in \Z}}\hat v_{k_2} \hat v_{k_3} \ee^{i (k_2+k_3)x} \int_0^\tau  \ee^{ 2 i s k_2 k_3}\dd s\\
& = \frac{i}{2} \overline{v} \hspace{-2mm}\sum_{\small\substack{k_2,k_3 \in \Z\\k_2\neq 0,\ k_3 \neq 0}}\hspace{-2mm} \hat v_{k_2} \hat v_{k_3} \ee^{i (k_2+k_3)x} \,\frac{ \ee^{ i \tau \left[\left(k_2+k_3\right)^2 - \left(k_2^2 + k_3^2\right)\right]}-1}{ \left(i  k_2\right)\left(i k_3\right)} + 2 \tau  \overline{v}\hat{v}_{0}\sum_{\small\substack{k\in\Z\\k \neq 0}} \hat{v}_{k} \ee^{i k x} +\tau\overline{v} \hat{v}_{0}^2.
\end{aligned}
\end{equation}
This gives at once the sought after result.
\end{proof}

\subsection{A second-order Fourier integrator for the cubic NLS}\label{sec:FI2nd}

The representation of $v(t_n+\tau)$ given in \eqref{vpre} together with that of $J^\tau_x(\V)$ in \eqref{J1} yields that
\begin{equation}
\begin{aligned}\label{exVf}
v(t_n+\tau) &= v(t_n) - i \mu \ee^{-i t_n \partial_x^2} \Big[J_{1,x}^\tau\big(\ee^{i t_n \partial_x^2} v(t_n)\big)+ J_{2,x}^\tau\big(\ee^{i t_n \partial_x^2} v(t_n)\big)\Big]\\
&\qquad + i\mu \tau \ee^{- i t_n \partial_x^2} \left[ \left\vert \ee^{i t_n \partial_x^2}v(t_n)\right\vert^2 \ee^{i t_n \partial_x^2}v(t_n)
+ i\mu \frac{\tau}{2}\left \vert \ee^{i t_n \partial_x^2} v(t_n) \right \vert^4  \ee^{i t_n \partial_x^2} v(t_n) \right]\\
&\qquad + \mathcal R_{2\gamma}(\tau^{1+2\gamma}).
\end{aligned}
\end{equation}
For the remainder term, we have used that $2\gamma+1\le 2+\gamma$ for $0\le\gamma\le 1$. Using the first terms in the Taylor series expansion $\mathrm{e}^{\tau \lambda} = 1+ \lambda + \frac12 \lambda^2 + \mathcal{O}(\lambda^3)$  we obtain with $\lambda=  i \mu \tau \left\vert \ee^{i t_n \partial_x^2}v(t_n)\right\vert^2$ that
\begin{align*}
v(t_n) + i \mu   \tau  \ee^{- i t_n \partial_x^2} & \left[\left\vert \ee^{i t_n \partial_x^2}v(t_n)\right\vert^2   \ee^{i t_n \partial_x^2}v(t_n)
 + i \mu \frac{\tau}{2} \left \vert \ee^{i t_n \partial_x^2} v(t_n) \right \vert^4  \ee^{i t_n \partial_x^2} v(t_n) \right] \\&
 =
 \ee^{- i t_n \partial_x^2}   \left[ 1+ i \mu \tau \left\vert \ee^{i t_n \partial_x^2}v(t_n)\right\vert^2
 + \frac{1}{2}\left(i \mu \tau  \left\vert \ee^{i t_n \partial_x^2}v(t_n)\right\vert^2\right)^2
 \right]\ee^{i t_n \partial_x^2}v(t_n)
  \\&
 =
 \ee^{- i t_n \partial_x^2}  \mathrm{e}^{ i \mu \tau \left\vert \ee^{i t_n \partial_x^2}v(t_n)\right\vert^2 }\ee^{i t_n \partial_x^2}v(t_n) +  \mathcal R_{0}(\tau^{3}).
\end{align*}
Plugging the above expansion into \eqref{exVf} yields that \black
\black
\begin{equation}\label{eq:expansionV}
\begin{aligned}
v(t_n+\tau) &= \ee^{- i t_n \partial_x^2} \biggl(\ee^{i \mu \tau \left\vert \ee^{i t_n \partial_x^2}v(t_n)\right\vert^2} \ee^{i t_n \partial_x^2}v(t_n)  - i \mu  \Big(J_{1,x}^\tau\big(\ee^{i t_n \partial_x^2} v(t_n)\big)+ J_{2,x}^\tau\big(\ee^{i t_n \partial_x^2} v(t_n)\big)\Big)\biggr)\\
&\qquad + \mathcal R_{2\gamma}(\tau^{1+2\gamma}).
\end{aligned}
\end{equation}
This expansion motivates us to define the following numerical scheme in $v$. Let $\tau$ be the step size and $t_n = n\tau$. The numerical approximation $v^n$ to the solution $v(t_n)$ is given by
\begin{equation}\label{schemeV}
\begin{aligned}
v^{n+1} &=  \ee^{- i t_n \partial_x^2} \biggl(\ee^{i \mu \tau \left\vert \ee^{i t_n \partial_x^2}v^n\right\vert^2} \ee^{i t_n \partial_x^2}v^n- i \mu  \left(J_{1,x}^\tau(\ee^{ i t_n \partial_x^2} v^n) + J_{2,x}^\tau (\ee^{ i t_n \partial_x^2} v^n) \right)\biggr).
\end{aligned}
\end{equation}

To obtain a numerical approximation to the solution $u(t)$ of the original problem~\eqref{nls} we twist the solution back again (see \eqref{twist}), i.e., we set
$$
u^{n+1} = \ee^{ i t_{n+1} \partial_x^2} v^{n+1}
$$
and $v^n = \ee^{ - i t_n \partial_x^2} u^n$, respectively.  We thus obtain the following scheme for the integration of the one-dimensional cubic nonlinear Schr\"odinger equation
\begin{equation}\label{uu}
\begin{aligned}
u^{n+1}
& = \ee^{ i \tau \partial_x^2}  \Bigl(\ee^{i \mu \tau \vert u^n\vert^2}u^n - i \mu \bigl(J_{1,x}^\tau\left( u^n\right) + J_{2,x}^\tau \left( u^n\right)\bigr)\Bigr)
\end{aligned}
\end{equation}
with $J^\tau_{1,x}(u^n)$ and $J^\tau_{2,x}(u^n)$ given by Lemmas~\ref{lem:int1} and \ref{lem:int2}, respectively. This scheme will be called \emph{second-order Fourier integrator} henceforth.

\section{Convergence analysis in one dimension $(d=1)$}
\label{sect:conv-1d}

In this section, we give the convergence results for the one-dimensional scheme~\eqref{uu}. The following lemma gives the required Lipschitz bounds in $H^r$ for the terms arising in \eqref{uu}.

\begin{lemma}\label{lem:lipest}
For given $R>0$ there exists a constant $L>0$ such that
\begin{align}
\left\|\ee^{i\mu\tau |w_1|^2}w_1 - \ee^{i\mu\tau |w_2|^2}w_2\right\|_r &\le (1+\tau L)\|w_1-w_2\|_r, \label{lip-exp}\\
\left\|J_{1,x}^\tau\left( w_1\right) - J_{1,x}^\tau\left( w_2\right)\right\|_r &\le \tau L\|w_1-w_2\|_r, \label{lip-j1}\\[1.5mm]
\left\|J_{2,x}^\tau\left( w_1\right) - J_{2,x}^\tau\left( w_2\right)\right\|_r &\le \tau L\|w_1-w_2\|_r. \label{lip-j2}
\end{align}
for all $w_1,w_2\in H^r$ with $\|w_1\|,\|w_2\|\le R$.
\end{lemma}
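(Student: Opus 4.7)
The plan is to exploit the fact that each of the three quantities is (up to an explicit identity contribution for the exponential term) multilinear in $v$ and $\overline v$, so that the algebra estimate \eqref{bil} and the isometry \eqref{iso} deliver the required bounds directly.

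For \eqref{lip-j1} and \eqref{lip-j2}, rather than differentiating the closed-form expressions of Lemmas~\ref{lem:int1}--\ref{lem:int2}, I would work with the integral forms \eqref{eq:J1J2a}, \eqref{eq:J1J2b}. Using the algebraic identities $2k_1 k = k^2 + k_1^2 - (k_2+k_3)^2$ (cf.~\eqref{eq:observation}) and $2k_2 k_3 = (k_2+k_3)^2 - k_2^2 - k_3^2$, the phase factors translate back to physical space as
\begin{equation*}
J_{1,x}^\tau(v) = \int_0^\tau \ee^{-is\partial_x^2}\Big[\bigl(\ee^{-is\partial_x^2}\overline{v}\bigr)\bigl(\ee^{is\partial_x^2}v^2\bigr)\Big]\dd s,\qquad J_{2,x}^\tau(v) = \overline{v}\int_0^\tau \ee^{-is\partial_x^2}\bigl(\ee^{is\partial_x^2}v\bigr)^2\dd s.
\end{equation*}
These are trilinear in $(v,\overline v)$, and the Lipschitz bounds then follow from a standard telescoping argument (e.g.\ adding and subtracting the intermediate term $\bigl(\ee^{-is\partial_x^2}\overline{w_2}\bigr)\bigl(\ee^{is\partial_x^2}w_1^2\bigr)$), the factorization $w_1^2 - w_2^2 = (w_1-w_2)(w_1+w_2)$, and termwise application of \eqref{bil} and \eqref{iso}. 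The outer $\dd s$-integral automatically supplies the factor $\tau$, and the remaining quadratic coefficient is controlled by $R^2$.

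For \eqref{lip-exp} I would peel off the identity contribution to isolate the constant $1$:
\begin{equation*}
\ee^{i\mu\tau|w_1|^2}w_1 - \ee^{i\mu\tau|w_2|^2}w_2 = (w_1-w_2) + \bigl(\ee^{i\mu\tau|w_1|^2}-1\bigr)(w_1-w_2) + \bigl(\ee^{i\mu\tau|w_1|^2}-\ee^{i\mu\tau|w_2|^2}\bigr)w_2.
\end{equation*}
The first summand contributes the leading $1$ in $(1+\tau L)$, and the remaining two summands are estimated via \eqref{bil} together with the fundamental-theorem-of-calculus identities
\begin{equation*}
\ee^{i\mu\tau|w|^2}-1 = i\mu\tau\!\int_0^1\! |w|^2\,\ee^{i\mu\tau\theta|w|^2}\dd\theta,\;\; \ee^{i\mu\tau|w_1|^2}-\ee^{i\mu\tau|w_2|^2} = i\mu\tau\!\int_0^1\!(|w_1|^2-|w_2|^2)\,\ee^{i\mu\tau[\theta|w_1|^2+(1-\theta)|w_2|^2]}\dd\theta,
\end{equation*}
and the factorization $|w_1|^2-|w_2|^2 = w_1\overline{(w_1-w_2)}+(w_1-w_2)\overline{w_2}$, which turns the quadratic difference into a linear factor in $w_1-w_2$ with coefficient controlled by $R$.

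The main technical obstacle is to bound $\|\ee^{i\phi}\|_r$ uniformly in real-valued $\phi\in H^r$ on bounded sets, which is required in order to control the exponentials appearing inside the above integrals. Since $r>1/2$, the space $H^r(\mathbb{T})$ is a Banach algebra, so iterating \eqref{bil} on the Taylor series $\ee^{i\phi}=\sum_{n\ge 0}(i\phi)^n/n!$ gives $\|\ee^{i\phi}\|_r \le 1 + c_{r,d}^{-1}\bigl(\ee^{c_{r,d}\|\phi\|_r}-1\bigr)$. The phases of interest satisfy $\|\phi\|_r \le 2c_{r,d}|\mu|\tau R^2$; hence restricting to a bounded interval $\tau\in[0,\tau_0]$ keeps the exponential moduli uniformly bounded, and every remaining constant is absorbed into a single $L=L(R,r,\mu,\tau_0)$.
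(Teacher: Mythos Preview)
Your argument is correct. For \eqref{lip-exp} it coincides with the paper's, which uses the same decomposition (the paper just writes it as $\ee^{i\mu\tau |w_1|^2}(w_1-w_2) + (\ee^{i\mu\tau |w_1|^2}- \ee^{i\mu\tau |w_2|^2})w_2$ and invokes $\|1\|_r=1$ to peel off the identity; your further split makes this explicit). Your observation that a $\tau\le\tau_0$ restriction is needed so that $L$ does not depend on $\tau$ is correct and is tacitly assumed in the paper as well.

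For \eqref{lip-j1}--\eqref{lip-j2} your route differs slightly from the paper's. You first use the phase identities to rewrite $J_{1,x}^\tau$ and $J_{2,x}^\tau$ in physical space and then run the trilinear telescoping with \eqref{bil} and \eqref{iso}. The paper instead stays in Fourier space: it bounds the oscillatory integral $\int_0^\tau \ee^{2isk_1k}\dd s$ trivially by $\tau$, telescopes $\overline\alpha_{k_1}\alpha_{k_2}\alpha_{k_3}-\overline\beta_{k_1}\beta_{k_2}\beta_{k_3}$ at the level of coefficients, and recognises the resulting absolute-value sums as $H^r$ norms of products. Both routes amount to the same trilinear Lipschitz estimate; the paper's version is a touch more economical because it does not need the physical-space representations, while yours makes the structure of the trilinear map (and hence the applicability of \eqref{bil}, \eqref{iso}) more transparent.
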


\begin{proof}
The proof of \eqref{lip-exp} makes use of the decomposition
$$
\ee^{i\mu\tau |w_1|^2}w_1 - \ee^{i\mu\tau |w_2|^2}w_2 = \ee^{i\mu\tau |w_1|^2}(w_1-w_2) +\left(\ee^{i\mu\tau |w_1|^2}- \ee^{i\mu\tau |w_2|^2}\right)w_2.
$$
The arising products are all estimated with the help of the bilinear estimate~\eqref{bil}. The identity $\|1\|_r=1$ finally implies the desired bound.

From the definition of $J_{1,x}^\tau(v)$ and $J_{2,x}^\tau(v)$ given in \eqref{J1J2} we readily read off their Fourier coefficients. Let $\alpha_k$ and $\beta_k$ denote the Fourier coefficients of $w_1$ and $w_2$, respectively. For showing~\eqref{lip-j1}, we have to estimate the Fourier series with coefficients
\begin{equation*}
\widehat W_k = \sum_{\small \substack{k_1,k_2,k_3 \in \Z\\k_1+k_2+k_3 = k}}\left(\overline\alpha_{k_1}\alpha_{k_2} \alpha_{k_3}- \overline\beta_{k_1}\beta_{k_2} \beta_{k_3}\right) \int_0^\tau \ee^{ 2 i s k_1 k } \dd s.
\end{equation*}
Using the decomposition
$$
\overline\alpha_{k_1}\alpha_{k_2} \alpha_{k_3}- \overline\beta_{k_1}\beta_{k_2} \beta_{k_3} = \overline\alpha_{k_1}\alpha_{k_2} (\alpha_{k_3}-\beta_{k_3}) + \overline\alpha_{k_1}(\alpha_{k_2} - \beta_{k_2}) \beta_{k_3} + (\overline\alpha_{k_1} - \overline\beta_{k_1})\beta_{k_2} \beta_{k_3},
$$
these coefficients are readily estimated as
$$
\bigl|\widehat W_k\bigr| = \tau \left(\widehat V_{1,k} + \widehat V_{2,k} + \widehat V_{3,k}\right)
$$
with
$$
\widehat V_{1,k} = \sum_{k_1+k_2+k_3 = k}|\alpha_{k_1}||\alpha_{k_2}| |\alpha_{k_3}-\beta_{k_3}|
$$
and similar expressions for $\widehat V_{2,k}$ and $\widehat V_{3,k}$. Thus, for $1\le j\le 3$, the Fourier series with coefficients $(\widehat V_{j,k})_{k\in\Z}$ are products of known functions, and the application of the bilinear estimate~\eqref{bil} shows the desired bound.

For \eqref{lip-j2}, we proceed in exactly the same way.
\end{proof}

We are now in the position to state the convergence result in $H^r(\mathbb{T})$ with $r>1/2$.

\begin{theorem}\label{thm-1d}
Let $d=1$, $r>1/2$ and $0 < \gamma \leq 1$. Assume that the exact solution of \eqref{nls} satisfies $u(t) \in H^{r+2\gamma}(\mathbb{T})$ for $0 \leq t \leq T$. Then, there exists a constant $\tau_0>0$ such that for all step sizes $0<\tau \leq \tau_0$ and times $t_n \leq T$ we have that the global error of \eqref{uu} is bounded by
\begin{equation*}
\Vert u(t_{n}) - u^{n} \Vert_r \leq c \tau^{2\gamma},
\end{equation*}
where $c$ depends on $\sup_{0\leq t \leq T} \Vert u(t)\Vert_{r+2\gamma}$.
\end{theorem}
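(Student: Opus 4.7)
The approach is a classical Lady Windermere's fan: combine a local error bound extracted directly from the expansion \eqref{eq:expansionV} with a stability estimate for the one-step map defined by \eqref{uu}, and close via discrete Gronwall. Throughout I would denote by
\begin{equation*}
\Phi^\tau(w) := \ee^{i\tau\partial_x^2}\bigl(\ee^{i\mu\tau|w|^2}w - i\mu\bigl(J_{1,x}^\tau(w)+J_{2,x}^\tau(w)\bigr)\bigr)
\end{equation*}
the numerical flow, so that $u^{n+1}=\Phi^\tau(u^n)$.

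The local error comes essentially for free from \eqref{eq:expansionV}. Multiplying that identity by $\ee^{i t_{n+1}\partial_x^2}$ and using $u(t)=\ee^{it\partial_x^2}v(t)$ together with the isometry \eqref{iso}, one obtains
\begin{equation*}
u(t_{n+1}) = \Phi^\tau(u(t_n)) + \delta_{n+1}, \qquad \|\delta_{n+1}\|_r \le c\,\tau^{1+2\gamma},
\end{equation*}
with a constant $c$ depending on $\sup_{0\le t\le T}\|u(t)\|_{r+2\gamma}$. This is exactly the content of the $\mathcal R_{2\gamma}(\tau^{1+2\gamma})$ remainder already tracked in \eqref{eq:expansionV} (recall that $1+2\gamma\le 2+\gamma$ for $\gamma\le 1$, so this is the driving term). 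For the stability estimate I would combine the three bounds of Lemma~\ref{lem:lipest} with the isometry \eqref{iso} to conclude that for every $R>0$ there is $L=L(R)>0$ such that
\begin{equation*}
\|\Phi^\tau(w_1) - \Phi^\tau(w_2)\|_r \le (1+\tau L)\|w_1 - w_2\|_r
\end{equation*}
whenever $\|w_1\|_r, \|w_2\|_r \le R$.

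With both ingredients in hand I would close by a simultaneous induction on the error and on the $H^r$-norm of the iterates. Fix $R := 1 + \sup_{0\le t\le T}\|u(t)\|_r$ (finite because $r+2\gamma\ge r$), set $L=L(R)$ and $C := c(\ee^{LT}-1)/L$, and choose $\tau_0$ so small that $C\tau_0^{2\gamma}\le 1$. Assume inductively that $\|u^m\|_r \le R$ and $\|u(t_m) - u^m\|_r \le C\tau^{2\gamma}$ for all $m\le n$. The telescoping identity
\begin{equation*}
u(t_{n+1}) - u^{n+1} = \bigl(\Phi^\tau(u(t_n)) - \Phi^\tau(u^n)\bigr) + \delta_{n+1},
\end{equation*}
together with the two estimates above, yields $\|u(t_{n+1})-u^{n+1}\|_r \le (1+\tau L)\|u(t_n)-u^n\|_r + c\tau^{1+2\gamma}$, and discrete Gronwall returns $\|u(t_{n+1}) - u^{n+1}\|_r \le C\tau^{2\gamma}$. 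By the choice of $\tau_0$ this also gives $\|u^{n+1}\|_r \le \|u(t_{n+1})\|_r + C\tau^{2\gamma} \le R$, closing the induction.

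The only real subtlety, and what I would flag as the main obstacle, is precisely this simultaneous induction: the stability bound from Lemma~\ref{lem:lipest} is conditional on an a priori $H^r$-bound of $u^n$, which itself has to be extracted from the error estimate. Once the threshold $\tau_0$ is chosen correctly, everything falls into place and no further regularity beyond $H^{r+2\gamma}$ is required.
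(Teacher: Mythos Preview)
Your proof is correct and follows essentially the same route as the paper: local error from the expansion \eqref{eq:expansionV}, stability from Lemma~\ref{lem:lipest}, and a Lady Windermere's fan argument to close. The paper carries this out in the twisted variable $v$ rather than $u$ (equivalent by the isometry \eqref{iso}), and you are in fact more explicit than the paper about the simultaneous induction needed to control $\|u^n\|_r$ and thereby fix the Lipschitz constant $L$.
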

\begin{proof}
First note that as $\mathrm{e}^{i t \partial_x^2}$ is a linear isometry in $H^r$  (see \eqref{iso}) the error in $u$ and $v$ is the same, i.e.,
\[
\Vert u(t_{n}) - u^{n} \Vert_r = \Vert v(t_{n}) - v^{n} \Vert_r.
\]
Thanks to the representations \eqref{eq:expansionV}, \eqref{schemeV} and the form of the remainder $\mathcal R_{2\gamma}(\tau^{1+2\gamma})$ given in \eqref{eq:Rest} we obtain with the help of Lemma~\ref{lem:lipest} that
\begin{equation*}
\Vert v(t_{n+1}) - v^{n+1} \Vert_r \leq \bigl(1+ \tau(1+2 \vert \mu\vert) L \bigr) \Vert v(t_n) - v^n\Vert_{r}   + c \tau^{2\gamma+1},
\end{equation*}
where $L$ is the Lipschitz constant depending on $\Vert v(t_n)\Vert_r$ and $\Vert v^n\Vert_r$. The local error constant $c$ depends on a higher Sobolev norm of the solution $\sup_{0\leq t \leq T} \Vert u(t)\Vert_{r+2\gamma}$.

The assertion then follows by induction, respectively, a \emph{Lady Windermere's fan} argument (see, for example \cite{Faou12,HNW93,Lubich08}).
\end{proof}

Note that in order to exploit the bilinear estimates \eqref{bil} we had to assume that $r>1/2$ in the above theorem. Nevertheless one can derive a similar error bound in $L^2$ following the approach in \cite{Lubich08} as explained in the following. Fix $\varepsilon>0$. In a first step observe that Theorem \ref{thm-1d} implies that the second-order scheme \eqref{uu} is convergent  with order~$\tau^{1/2- \varepsilon}$ in $H^{3/2+\varepsilon}$ for solutions in $H^2$. This yields an a priori bound on the numerical solution $u^n$ in $H^{3/2+\varepsilon}$. Now we use the refined bilinear estimate
\begin{equation}\label{eq:est-refined}
\Vert f g \Vert_{0} \leq c \Vert f \Vert_{0} \Vert g \Vert_{3/2+\varepsilon},
\end{equation}
which is a consequence of H\"{o}lder's inequality $\Vert f g \Vert_{0} \leq \Vert f \Vert_{0} \Vert g \Vert_{L^\infty}$ and the standard Sobolev embedding theorem in dimensions $1\le d\le 3$. Estimate \eqref{eq:est-refined} together with the a priori boundedness of the numerical solution in $H^{3/2+\varepsilon}$ then yields the following error bound in $L^2(\mathbb{T})$.
\begin{corollary}\label{cor:1d}
Let $d=1$ and assume that the exact solution of \eqref{nls} satisfies $u(t) \in H^{2}(\mathbb{T})$ for $0 \leq t \leq T$. Then, there exists a  constant $\tau_0>0$ such that for all  step sizes   $0<\tau \leq \tau_0$ and times $t_n \leq T$ we have that the global error of \eqref{uu} is bounded by
\begin{equation*}
\Vert u(t_{n}) - u^{n} \Vert_0 \leq c \tau^{2},
\end{equation*}
where $c$ depends on $\sup_{0\leq t \leq T} \Vert u(t)\Vert_{2}$.\qed
\end{corollary}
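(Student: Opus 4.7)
The strategy is the bootstrap indicated in the paragraph preceding the statement: first invoke the already-proven Theorem~\ref{thm-1d} in a higher Sobolev norm to obtain an a priori bound on the numerical solution, and then repeat the convergence analysis in $L^2$, replacing the bilinear estimate \eqref{bil} (which forced $r>1/2$) by the refined bilinear estimate \eqref{eq:est-refined}.

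\textbf{Step 1: a priori bound in $H^{3/2+\varepsilon}$.} Fix $\varepsilon>0$ small and pick $r=3/2+\varepsilon$ and $\gamma=(1-2\varepsilon)/4$, so that $r+2\gamma = 2-\varepsilon/2\le 2$ and $r>1/2$. Since $u\in H^2\hookrightarrow H^{r+2\gamma}$, Theorem~\ref{thm-1d} applies and gives the global error bound $\|u(t_n)-u^n\|_{3/2+\varepsilon}\le c\tau^{1/2-\varepsilon}$. In particular, for $\tau$ sufficiently small, there exists $M>0$ independent of $\tau$ and $n$ with $\|u^n\|_{3/2+\varepsilon}\le M$ for all $t_n\le T$.

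\textbf{Step 2: $L^2$ local error.} I would revisit the expansion \eqref{eq:expansionV} and check that each remainder bound is compatible with the refined estimate \eqref{eq:est-refined}. Concretely, the local-error remainder was built from the pieces $R_1$ (Lemma~\ref{lem:bound_R2}), $R_3$ (Lemma~\ref{lem:int-rem}) and the standard Taylor remainder for the cubic exponential. For each of these, one of the three factors in the cubic nonlinearity (and similarly one of the five factors in the quintic one) can be placed in $H^{3/2+\varepsilon}$ using the a priori bound on the exact solution in $H^2\hookrightarrow H^{3/2+\varepsilon}$, while the remaining factors carry the two regularization powers. Thus \eqref{eq:est-refined}, combined with the isometry \eqref{iso}, yields the $L^2$ local-error estimate $\|v(t_{n+1})-\Phi_\tau(v(t_n))\|_0\le c\tau^3$, where $\Phi_\tau$ is the one-step map in \eqref{schemeV} and $c$ depends on $\sup_{0\le t\le T}\|u(t)\|_2$. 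This corresponds to choosing $\gamma=1$ in the analogues of \eqref{r1bound} and \eqref{Rfinal2ga}, where now the two derivatives available from the $H^2$ assumption are placed on a single factor while the other factors live in $L^2$ after absorbing the remaining $H^{3/2+\varepsilon}$ factor into a constant via \eqref{eq:est-refined}.

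\textbf{Step 3: $L^2$ stability.} I would re-prove Lemma~\ref{lem:lipest} in the $L^2$ norm under the additional hypothesis that one of the two arguments is bounded in $H^{3/2+\varepsilon}$. The telescoping identities used in that proof are unchanged; in each summand, two of the three factors can be estimated in $H^{3/2+\varepsilon}\le M$ via \eqref{eq:est-refined} (using the a priori bound of Step 1 for $u^n$ and Sobolev regularity for $u(t_n)$), while the difference is placed in $L^2$. This yields a Lipschitz constant $L=L(M,\sup_t\|u(t)\|_2)$ with $\|\Phi_\tau(w_1)-\Phi_\tau(w_2)\|_0\le (1+\tau L)\|w_1-w_2\|_0$ whenever $\|w_1\|_{3/2+\varepsilon},\|w_2\|_{3/2+\varepsilon}\le M$.

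\textbf{Step 4: conclusion via Lady Windermere.} Combining the $L^2$ local-error bound $c\tau^3$ from Step~2 with the $L^2$ stability estimate from Step~3 exactly as in the proof of Theorem~\ref{thm-1d}, an induction (Lady Windermere's fan) gives $\|v(t_n)-v^n\|_0\le c\tau^2$ for $t_n\le T$, and the isometry \eqref{iso} transfers this bound to $u$, proving the corollary.

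\textbf{Main obstacle.} The only nontrivial point is verifying that in every bilinear (and trilinear/quintilinear) estimate appearing in the construction of the scheme \emph{and} in the stability argument, a single factor can be assigned the role of the $H^{3/2+\varepsilon}$-bounded piece, either from the $H^2$-regularity of the exact solution or from the a priori bound on $u^n$ produced in Step~1. This is a bookkeeping exercise but it must be carried out uniformly across all terms in $R_1$, $R_3$, the quintic correction, and the two integrals $J_{1,x}^\tau$, $J_{2,x}^\tau$; once that is in place the argument is identical to that of Theorem~\ref{thm-1d}.
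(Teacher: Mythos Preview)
Your proposal is correct and follows exactly the bootstrap strategy the paper sketches in the paragraph preceding the corollary: first obtain an a priori $H^{3/2+\varepsilon}$ bound on $u^n$ from Theorem~\ref{thm-1d}, then rerun the local-error and stability arguments in $L^2$ with the refined product estimate~\eqref{eq:est-refined}, and conclude by Lady Windermere. The paper gives no further details beyond that paragraph (note the \qedsymbol\ at the end of the corollary), so your write-up is in fact more explicit than the paper's own argument. One tiny slip: in Step~1 your arithmetic gives $r+2\gamma=2$ exactly, not $2-\varepsilon/2$; and in Step~2 the correct bookkeeping is that \emph{two} of the three factors must be controlled in $L^\infty$ (via $H^{3/2+\varepsilon}$) while the single factor carrying the two derivatives sits in $L^2$, but this does not affect the validity of the argument.
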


\section{Construction of a scheme in arbitrary dimensions $d\geq 1$}\label{sect:const-dd}

In this section we extend our approach to arbitrary dimensions. Due to the problem of resonances, we have to modify slightly our approach. First, we adapt our notation. For
$\boka=(\kappa_1,\ldots,\kappa_d), \bola=(\lambda_1,\ldots,\lambda_d)\in\Z^d$ and $\bx=(x_1,\ldots,x_d)\in \mathbb{T}^d$, we set
$$
\boka\cdot\bola = \kappa_1 \lambda_1 + \ldots + \kappa_d \lambda_d,\qquad \boka\cdot\bx = \kappa_1 x_1 + \ldots + \kappa_d x_d.
$$
Again, we start from the approximation
\begin{equation}
\begin{aligned}\label{vpre2d}
v(t_n+\tau) &= v(t_n) - i \mu  \ee^{\m i t_n \Delta}  \int_0^\tau \ee^{\m i s\Delta} \Big[\left(\ee^{\m i s\Delta} \overline \V \right)  \left( \ee^{\p i s \Delta} \V \right)^2\Big]\dd s \\
&\qquad - \mu^2  \ee^{- i t_n \Delta}\int_0^\tau s \,\left \vert \V \right \vert^4 \V\,\dd s + \mathcal R_{2\gamma}(\tau^{2+\gamma})\\
&= v(t_n) - i \mu \ee^{- i t_n \Delta}  J^\tau_\bx(\V)
- \mu^2 \frac{\tau^2}{2}\ee^{- i t_n \Delta} \left(\left \vert \V \right \vert^4 \V
\right) + \mathcal R_{2\gamma}(\tau^{2+\gamma}),
\end{aligned}
\end{equation}
which is obtained in exactly the same way as \eqref{vpre} in Section~\ref{sec:sec-order}. The integral $J^\tau_\bx$ has the following form
\begin{equation} \label{Jd2}
J_\bx^\tau(v) = \int_0^\tau \sum_{\boka,\bola,\bonu\in\mathbb Z^d} \ee^{i s \Omega\left(\boka,\bola,\bonu\right)} \hat{\overline {v}}_{\boka} \hat v_{\bola} \hat v_{\bonu} \mathrm{e}^{i (\boka+\bola+\bonu)\cdot\bx}\,\dd s
\end{equation}
with the nonlinear interactions
\[
\Omega\left(\boka,\bola,\bonu\right) = (\boka+\bola+\bonu)\cdot(\boka+\bola+\bonu) + \boka\cdot\boka-\bola\cdot\bola-\bonu\cdot\bonu.
\]
We have to approximate this integral. For this purpose, we simplify
\begin{equation} \label{key2d}
\Omega\left(\boka,\bola,\bonu\right) = 2\,\boka\cdot\boka  + 2\, \boka\cdot\bola + 2\,\boka\cdot\bonu + 2\, \bola\cdot\bonu
\end{equation}
and expand the term $\ee^{i s \Omega\left(\boka,\bola,\bonu\right)}$ by applying Lemma \ref{mani} three times. This shows that
\begin{equation}\label{omfac2}
\begin{aligned}
\ee^{i s \Omega\left(\boka,\bola,\bonu\right)} &= \mathrm{e}^{2is \,\boka\cdot\boka} + \mathrm{e}^{2is\,\boka\cdot\bola} + \mathrm{e}^{2is \boka\cdot\bonu} + \mathrm{e}^{2is \,\bola\cdot\bonu} - 3 + \widetilde R_2^s\left(\boka,\bola,\bonu\right),
\end{aligned}
\end{equation}
where $\widetilde R_2^s$ is such that the remainder
\begin{align}
\widetilde R_3^\tau(v) = \int_0^\tau \sum_{\boka,\bola,\bonu\in\mathbb Z^d} \widetilde R_2^s\left(\boka,\bola,\bonu\right) \hat{\overline v}_{\boka} \hat v_{\bola}  \hat v_{\bonu} \mathrm{e}^{i (\boka+\bola+\bonu)\cdot\bx}\,\dd s
\end{align}
satisfies the following bounds:
\begin{equation}\label{BRk3-2da}
\widetilde R_3^\tau(v) = \mathcal R_{1+2\gamma}(\tau^{2+\gamma}),\qquad 0\le \gamma\le 1
\end{equation}
(use $\delta=\gamma$ and $\theta=1$ in Lemma~\ref{mani}) and
\begin{equation}\label{BRk3-2db}
\widetilde R_3^\tau(v) = \mathcal R_{\gamma}(\tau^{1+\gamma}),\qquad 0\le \gamma\le 1
\end{equation}
(use $\delta=0$ and $\theta=\gamma$ in Lemma~\ref{mani}). These bounds are verified in exactly the same way as the corresponding bound in the one-dimensional case in Lemma \ref{lem:int-rem}.

It remains to compute the integrals that arise by inserting \eqref{omfac2} into \eqref{Jd2}. We start with the quadratic term
\begin{equation*}
L(w) = \sum_{\boka\in\Z^d} \hat w_{\boka} \ee^{i\,\boka\cdot \bx} \int_0^\tau \ee^{2is\,\boka\cdot\boka} \dd s
\end{equation*}
for which we have the following result.

\begin{lemma}\label{lem:integral1}
For $w\in L^2(\mathbb T^d)$, it holds
\begin{equation}\label{eq:intq}
L(w) = \tau\varphi_1(-2i\tau\Delta) w,
\end{equation}
where $\varphi_1(z) = \left(\ee^z-1\right)/z$.
\end{lemma}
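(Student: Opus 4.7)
The plan is a direct Fourier-space computation, identifying the resulting multiplier. Since the sum is absolutely convergent in $L^2$ for $w\in L^2(\mathbb T^d)$, I interchange the integral with the Fourier series and evaluate
\[
\int_0^\tau \ee^{2is\,\boka\cdot\boka}\,\dd s
= \begin{cases}
\dfrac{\ee^{2i\tau|\boka|^2}-1}{2i|\boka|^2}, & \boka\neq 0,\\[1mm]
\tau, & \boka = 0,
\end{cases}
\]
so that
\[
L(w) = \sum_{\boka\in\Z^d} m_\tau(\boka)\,\hat w_{\boka}\,\ee^{i\,\boka\cdot\bx},
\qquad
m_\tau(\boka) = \int_0^\tau \ee^{2is|\boka|^2}\,\dd s.
\]

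Next I recognize $m_\tau(\boka)$ as a Fourier multiplier of the claimed shape. The Laplacian acts on the mode $\ee^{i\boka\cdot\bx}$ by multiplication with $-|\boka|^2$, hence $-2i\tau\Delta$ acts by multiplication with $2i\tau|\boka|^2$. With $\varphi_1(z)=(\ee^z-1)/z$ (extended continuously by $\varphi_1(0)=1$), the operator $\tau\varphi_1(-2i\tau\Delta)$ is the Fourier multiplier
\[
\tau\,\varphi_1\!\bigl(2i\tau|\boka|^2\bigr)
= \tau\cdot \frac{\ee^{2i\tau|\boka|^2}-1}{2i\tau|\boka|^2}
= \frac{\ee^{2i\tau|\boka|^2}-1}{2i|\boka|^2}
\qquad (\boka\neq 0),
\]
and $\tau$ for $\boka=0$. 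These symbols coincide with $m_\tau(\boka)$, so \eqref{eq:intq} follows.

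There is no real obstacle here; the only point requiring a touch of care is the zero mode $\boka=0$, which is handled uniformly by the convention $\varphi_1(0)=1$. The identity is therefore just the observation that the integral $\int_0^\tau\ee^{2is|\boka|^2}\,\dd s$ is exactly the Fourier symbol of $\tau\varphi_1(-2i\tau\Delta)$.
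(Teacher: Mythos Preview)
Your proof is correct and follows essentially the same approach as the paper: both compute the integral $\int_0^\tau \ee^{2is\,\boka\cdot\boka}\,\dd s = \tau\varphi_1(2i\tau\,\boka\cdot\boka)$ directly in Fourier space and identify it as the symbol of $\tau\varphi_1(-2i\tau\Delta)$. The paper's proof is just the one-line version of what you spelled out, including the zero-mode case.
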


\begin{proof}
Noting that
$$
\int_0^\tau \ee^{2is\,\boka\cdot\boka}\,\dd s = \tau \varphi_1(2i\tau\,\boka\cdot\boka),
$$
the result follows at once.
\end{proof}

For the computation of the remaining integrals, we employ Lemma \ref{mani} several times in \eqref{omfac2}. This results in
$$
\mathrm{e}^{2is\,\boka\cdot\bola} + \mathrm{e}^{2is\,\boka\cdot\bonu} + \mathrm{e}^{2is \,\bola\cdot\bonu} =
\sum_{j=1}^d \left(\ee^{2is\kappa_j\lambda_j} + \ee^{2is\kappa_j\nu_j} + \ee^{2is\lambda_j\nu_j} \right)- 3(d-1) + R_2^s\left(\boka,\bola,\bonu\right),
$$
where $R_2^s$ is such that the remainder
\begin{align}\label{BRk3-2d2}
R_3^\tau(v) = \int_0^\tau \sum_{\boka,\bola,\bonu\in\mathbb Z^d} R_2^s\left(\boka,\bola,\bonu\right) \hat{\overline v}_{\boka} \hat v_{\bola}  \hat v_{\bonu} \mathrm{e}^{i (\boka+\bola+\bonu)\cdot\bx}\,\dd s
\end{align}
is of class $\mathcal R_{2\gamma}(\tau^{1+2\gamma})$.

Motivated by this decomposition, we consider for $1\le j\le d$ the term
\begin{equation}
K_j(w,v) = \sum_{\boka,\bola\in\Z^d} \hat w_{\boka} \hat v_{\bola} \ee^{i(\boka+\bola)\cdot\bx} \int_0^\tau \ee^{2is\kappa_j\lambda_j}\,\dd s.
\end{equation}
This term is symmetric in its arguments $(w,v)$ and has the following representation.
\begin{lemma}\label{lem:integral-kompli}
For $w,v\in H^r(\mathbb{T}^d)$, we have
\begin{equation*}
\begin{aligned}
K_j(w,v) &= \frac{i}2\left[\ee^{-i\tau \partial_j^2}\left( \bigl(\ee^{i\tau \partial_j^2} \partial_j^{-1} w\bigr) \bigl( \ee^{i\tau \partial_j^2} \partial_j^{-1} v\bigr) \right) - \bigl(\partial_j^{-1} w\bigr) \bigl(\partial_j^{-1} v\bigr) \right]\\
&\quad +\tau \bigl[v \,\hat w_{0,j} + w \,\hat v_{0,j} - \hat w_{0,j} \hat v_{0,j}\bigr],
\end{aligned}
\end{equation*}
where $\hat w_{0,j}$ denotes the 0th Fourier coefficient of the partial Fourier transform of $w$ in direction~$j$.
\end{lemma}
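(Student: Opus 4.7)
The plan is to mimic the proof of Lemma~\ref{lem:int1} in the one-dimensional setting: split the sum defining $K_j(w,v)$ according to the resonance set $\{\kappa_j\lambda_j=0\}$ versus its complement, compute both contributions separately, and then express everything back in physical space using $\partial_j^{-1}$ and $\ee^{\pm i\tau\partial_j^2}$. Since the phase $\ee^{2is\kappa_j\lambda_j}$ only involves the $j$-th component of the multi-indices, the rest of the Fourier variables play the role of passive parameters, so the argument reduces essentially to the one-dimensional case but fibered over the transverse directions.

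For the resonant part I will first note that when $\kappa_j\lambda_j=0$ the integrand equals $1$, so $\int_0^\tau\ee^{2is\kappa_j\lambda_j}\dd s=\tau$. I would then decompose the index set $\{\kappa_j=0\}\cup\{\lambda_j=0\}$ by inclusion–exclusion. Since $\sum_{\kappa_j=0}\hat w_{\boka}\ee^{i\boka\cdot\bx}$ is by definition $\hat w_{0,j}$, and analogously for $v$, these three pieces contribute exactly $\tau\bigl[v\,\hat w_{0,j}+w\,\hat v_{0,j}-\hat w_{0,j}\hat v_{0,j}\bigr]$.

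For the nonresonant part I evaluate the integral as $(\ee^{2i\tau\kappa_j\lambda_j}-1)/(2i\kappa_j\lambda_j)$ and invoke the one-dimensional version of the key identity \eqref{key}, namely $2\kappa_j\lambda_j=(\kappa_j+\lambda_j)^2-\kappa_j^2-\lambda_j^2$. This lets me factor
\[
\ee^{2i\tau\kappa_j\lambda_j}=\ee^{i\tau(\kappa_j+\lambda_j)^2}\,\ee^{-i\tau\kappa_j^2}\,\ee^{-i\tau\lambda_j^2}.
\]
Recognising $\ee^{-i\tau\kappa_j^2}$ as the Fourier symbol of $\ee^{i\tau\partial_j^2}$, the outer factor $\ee^{i\tau(\kappa_j+\lambda_j)^2}$ as that of $\ee^{-i\tau\partial_j^2}$ acting on a product of frequency $\kappa_j+\lambda_j$, and $1/(i\kappa_j)$ as the symbol of $\partial_j^{-1}$ (valid exactly because we restricted to $\kappa_j,\lambda_j\neq 0$), the prefactor $1/(2i\kappa_j\lambda_j)=\frac{i}{2}\cdot\frac{1}{i\kappa_j}\cdot\frac{1}{i\lambda_j}$ gets absorbed into two applications of $\partial_j^{-1}$. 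Reassembling the Fourier sum produces precisely the first bracketed term, and the $-1$ produces the subtracted term $-(\partial_j^{-1}w)(\partial_j^{-1}v)$.

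There is no real obstacle beyond careful bookkeeping: the sign of the symbol of $\partial_j^2$ (namely $-\kappa_j^2$), the direction of the Schr\"odinger propagator (symbol $\ee^{-i\tau\kappa_j^2}$ for $\ee^{i\tau\partial_j^2}$), and the inclusion–exclusion for $\{\kappa_j=0\}\cup\{\lambda_j=0\}$ must all be tracked consistently. Since we only claim an identity and not a norm estimate, the regularity hypothesis $w,v\in H^r$ is only used to guarantee that every Fourier series in sight converges absolutely, so the rearrangements above are legitimate.
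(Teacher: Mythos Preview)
Your proof is correct and follows essentially the same route as the paper, which simply states that the argument is ``almost identical to that of Lemma~\ref{lem:int2}'' and omits the details. One minor remark: the paper points to Lemma~\ref{lem:int2} rather than Lemma~\ref{lem:int1} as the one-dimensional model, and indeed your computation (resonance set $\{\kappa_j=0\}\cup\{\lambda_j=0\}$, identity $2\kappa_j\lambda_j=(\kappa_j+\lambda_j)^2-\kappa_j^2-\lambda_j^2$) is the direct analogue of the $J_{2,x}^\tau$ calculation there, not of $J_{1,x}^\tau$; but this is only a labeling issue, and the substance of your argument is exactly what is needed.
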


\begin{proof}
The proof is almost identical to that of Lemma~\ref{lem:int2} and therefore omitted.
\end{proof}

Lemma~\ref{lem:integral-kompli} implies at once the following result: for $1\le j\le d$ and all
$v\in H^r$ it holds
\begin{equation}\label{eq:int-all}
\begin{aligned}
K_j(\overline v, v) &= \frac{i}2\left[\ee^{-i\tau \partial_j^2}\left( \bigl( \ee^{i\tau \partial_j^2} \partial_j^{-1}\overline v\bigr) \bigl( \ee^{i\tau \partial_j^2} \partial_j^{-1}v \bigr) \right) - \bigl|\partial_j^{-1} v\bigr|^2 \right]+\tau \bigl[ \hat v_{0,j}\overline v + \hat{\overline v}_{0,j} v - \left|\hat v_{0,j}\right|^2\bigr],\\
K_j(v, v) &= \frac{i}2\left[\ee^{-i\tau \partial_j^2}\left( \bigl( \ee^{i\tau \partial_j^2} \partial_j^{-1}v\bigr)^2 \right) - \bigl(\partial_j^{-1} v\bigr)^2 \right]+\tau \bigl[ 2\hat v_{0,j} v - \left(\hat v_{0,j}\right)^2\bigr].
\end{aligned}
\end{equation}
This motivates us to define the numerical approximation $v^{n+1}$ to the solution $v(t_{n+1})$ in $d\ge 2$ dimensions as follows
\begin{equation}\label{schemeV2d}
\begin{aligned}
v^{n+1} &= \ee^{- i t_n \Delta} \Bigl[\ee^{i \mu \tau \left\vert V_n\right\vert^2} V_n + i\mu \tau (3d-1)\bigl( \left\vert V_n\right\vert^2 V_n\bigr)\Bigr.\\
&\qquad - i \mu \Bigl. \tau\varphi_1(-2i\tau\Delta)\left(|V_n|^2 V_n\right) - i \mu \textstyle \sum_{j=1}^d \Bigl(K_j(V_n,V_n)\overline V_n + 2 K_j(\overline V_n,V_n)V_n\Bigr)\Bigr],
\end{aligned}
\end{equation}
where we have set $V_n = \ee^{ i t_n \Delta} v^n$. For the original solution $u(t_{n+1})$ we in obtain by twisting back the variable, i.e., setting
$$
u^{n+1} = \ee^{ i t_{n+1} \Delta} v^{n+1}
$$
the following numerical scheme
\begin{equation}\label{schemed}
\begin{aligned}
u^{n+1} &= \ee^{i \tau \Delta} \left[\ee^{i \mu \tau \left\vert u^n\right\vert^2} u^n + i\mu \tau (3d-1)\bigl( \left\vert u^n\right\vert^2 u^n\bigr)\right.\\
&\qquad - i \mu \Bigl. \tau\varphi_1(-2i\tau\Delta)\left(|u^n|^2 u^n\right) - i \mu \textstyle\sum_{j=1}^d \Bigl(K_j(u^n,u^n)\overline u^n + 2 K_j(\overline u^n,u^n)u^n\Bigr)\Bigr]
\end{aligned}
\end{equation}
with $K_j$ given in \eqref{eq:int-all}. In the next section we state the error estimates of this scheme.

\section{Convergence analysis of the modified approach}
\label{sect:conv-dd}

The convergence result of the modified approach in arbitrary dimensions is very similar to that of method~\eqref{uu} in one dimension. The only difference comes from the local error terms \eqref{BRk3-2da}, \eqref{BRk3-2db}, which are less favourable than~\eqref{Rfinal2ga} due to resonances.  Again, we first state the convergence result in $H^r(\mathbb{T}^d)$ with $r>d/2$.

\begin{theorem}\label{thmd-a}
Let $r>d/2$ and $0 \le \gamma \leq 1$. Assume that the exact solution of \eqref{nls} satisfies $u(t) \in H^{r+1+2\gamma}(\mathbb{T}^d)$ for $0 \leq t \leq T$. Then, there exists a  constant $\tau_0>0$ such that for all  step sizes $0<\tau \leq \tau_0$ and times $t_n \leq T$ we have that the global error of \eqref{schemed} is bounded by
\begin{equation*}
\Vert u(t_{n}) - u^{n} \Vert_r \leq c \tau^{1+\gamma},
\end{equation*}
where $c$ depends on $\sup_{0\leq t \leq T} \Vert u(t)\Vert_{r+1+2\gamma}$.
\end{theorem}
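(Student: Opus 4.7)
The plan is to mimic the template of Theorem~\ref{thm-1d} in the $d$-dimensional setting, with three ingredients: a local error representation, a Lipschitz stability estimate, and a Lady Windermere's fan induction. As in one dimension, the isometry~\eqref{iso} gives $\|u(t_n)-u^n\|_r=\|v(t_n)-v^n\|_r$, so I would work throughout with the twisted iteration~\eqref{schemeV2d}.

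For the local error I would insert the decomposition~\eqref{omfac2} into the Fourier representation~\eqref{Jd2} of $J_\bx^\tau$ and then further split each cross term $\ee^{2is\boka\cdot\bola}$ (and its cyclic variants) into a sum of one-dimensional phases $\ee^{2is\kappa_j\lambda_j}$ by repeatedly applying Lemma~\ref{mani}, as sketched just before~\eqref{BRk3-2d2}. Integrating those phases exactly via Lemmas~\ref{lem:integral1} and~\ref{lem:integral-kompli}, together with the identities~\eqref{eq:int-all}, reproduces one step of~\eqref{schemeV2d} applied to the exact datum $v(t_n)$. The resulting local truncation error is composed of four pieces: the $\mathcal R_{2\gamma}(\tau^{2+\gamma})$ term already carried in~\eqref{vpre2d}; the remainder $\widetilde R_3^\tau(\V)$ of class $\mathcal R_{1+2\gamma}(\tau^{2+\gamma})$, cf.~\eqref{BRk3-2da}; the remainder $R_3^\tau(\V)$ coming from~\eqref{BRk3-2d2}; and a cubic Taylor remainder $\mathcal R_0(\tau^3)$ from the expansion of $\ee^{i\mu\tau|\V|^2}$. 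The symmetric bound $\mathcal R_{2\gamma}(\tau^{1+2\gamma})$ recorded in~\eqref{BRk3-2d2} is too weak to reach order $\tau^{2+\gamma}$ for $\gamma<1$, so I would revisit its derivation and instead pick the asymmetric pair $(\delta,\theta)=(1,\gamma)$ in each coordinatewise application of Lemma~\ref{mani}, which upgrades $R_3^\tau(\V)$ to $\mathcal R_{1+\gamma}(\tau^{2+\gamma})$. Altogether the local truncation error is bounded by $c\tau^{2+\gamma}$ under the hypothesis $v(t_n)\in H^{r+1+2\gamma}$, the binding regularity constraint coming from $\widetilde R_3^\tau(\V)$.

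Stability in $H^r$ is the $d$-dimensional analogue of Lemma~\ref{lem:lipest} for the one-step map of~\eqref{schemeV2d}. Each ingredient is $H^r$-Lipschitz on balls of fixed radius: the pointwise exponential is handled as in~\eqref{lip-exp}; the Fourier multiplier $\varphi_1(-2i\tau\Delta)$ has modulus at most one; the cubic term $|V_n|^2V_n$ is controlled by the bilinear estimate~\eqref{bil}; and the symmetric bilinear operators $K_j$ from Lemma~\ref{lem:integral-kompli} factor in Fourier space as one-dimensional Cauchy convolutions, with $\partial_j^{-1}$ acting as a Fourier multiplier of modulus at most one on the non-zero modes and the zero-mode contributions isolated explicitly. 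The bilinear estimate imposes the hypothesis $r>d/2$. Combining the local error with the Lipschitz bound yields
\[
\|v(t_{n+1})-v^{n+1}\|_r \le (1+\tau L)\|v(t_n)-v^n\|_r + c\tau^{2+\gamma},
\]
and a small-$\tau_0$ bootstrap keeps $\|v^n\|_r$ uniformly bounded so that $L$ is independent of $n$. A Lady Windermere's fan induction, exactly as in the proof of Theorem~\ref{thm-1d}, then produces the claimed bound $\|v(t_n)-v^n\|_r\le c\tau^{1+\gamma}$ for $t_n\le T$.

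The main obstacle is the local error step: extracting order $\tau^{2+\gamma}$ from the $d$-dimensional resonance structure~\eqref{key2d} requires choosing the right interpolation in Lemma~\ref{mani} (asymmetric rather than symmetric) and carefully distributing the resulting derivative weights between the three momenta $\boka,\bola,\bonu$ in the bilinear estimate, all while staying within the regularity budget $H^{r+1+2\gamma}$ dictated by $\widetilde R_3^\tau(\V)$. Once this is in place, stability and the telescoping step are routine adaptations of the one-dimensional argument.
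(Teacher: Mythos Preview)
Your proposal is correct and follows the paper's approach exactly: the paper states that Theorem~\ref{thmd-a} is proved in the same way as Theorem~\ref{thm-1d}, with the local error bound, the $H^r$-Lipschitz stability of the one-step map, and a Lady Windermere's fan induction, the binding regularity constraint being~\eqref{BRk3-2da}. The ``obstacle'' you raise for $R_3^\tau$ is not one, since the parameter in~\eqref{BRk3-2d2} is free: taking $\gamma'=(1+\gamma)/2$ there already yields $R_3^\tau\in\mathcal R_{1+\gamma}(\tau^{2+\gamma})$, comfortably within the $H^{r+1+2\gamma}$ budget, so your asymmetric interpolation, while valid, is unnecessary.
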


This theorem is proved exactly in the same way as Theorem~\ref{thm-1d} above. Note that the stronger regularity assumption is due to condition~\eqref{BRk3-2da}, which itself is a consequence of resonances. If the solution is in $H^{r+2}$, our scheme~\eqref{schemed} is convergent of order 3/2. For second-order convergence in $H^r$, our approach requires $H^{r+3}$ regularity.

The following theorem gives a convergence result for solutions in $H^{r+\gamma}$ for $0<\gamma\le 1$. Instead of condition~\eqref{BRk3-2da}, it makes use of condition~\eqref{BRk3-2db}. Otherwise, it is proved in exactly the same way as Theorem~\ref{thmd-a}.

\begin{theorem}\label{thmd-b}
Let $r>d/2$ and $0 \le \gamma \leq 1$. Assume that the exact solution of \eqref{nls} satisfies $u(t) \in H^{r+\gamma}(\mathbb{T}^d)$ for $0 \leq t \leq T$. Then, there exists a  constant $\tau_0>0$ such that for all  step sizes $0<\tau \leq \tau_0$ and times $t_n \leq T$ we have that the global error of \eqref{schemed} is bounded by
\begin{equation*}
\Vert u(t_{n}) - u^{n} \Vert_r \leq c \tau^{\gamma},
\end{equation*}
where $c$ depends on $\sup_{0\leq t \leq T} \Vert u(t)\Vert_{r+\gamma}$.
\end{theorem}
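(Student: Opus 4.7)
The plan is to follow the proof of Theorem~\ref{thmd-a} verbatim, substituting only one ingredient: the bound on the remainder $\widetilde R_3^\tau$ coming from the phase expansion~\eqref{omfac2}. Where Theorem~\ref{thmd-a} invokes~\eqref{BRk3-2da} to obtain $\widetilde R_3^\tau\in\mathcal R_{1+2\gamma}(\tau^{2+\gamma})$ at the cost of $H^{r+1+2\gamma}$ regularity, I would instead apply~\eqref{BRk3-2db} to obtain $\widetilde R_3^\tau\in\mathcal R_\gamma(\tau^{1+\gamma})$ under the weaker hypothesis $H^{r+\gamma}$. This is the only point at which the two proofs essentially differ.

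What remains is to re-examine the other remainders in the derivation of~\eqref{schemeV2d} from~\eqref{vpre2d} and to verify that they too are controlled by $c\tau^{1+\gamma}$, with a constant $c$ depending only on $\sup_{0\le t\le T}\|v(t)\|_{r+\gamma}$. The term $R_1\in\mathcal R_0(s^2)$ integrates to an $\mathcal R_0(\tau^2)$ contribution, requiring only $H^r$. Truncating the Taylor series of $\ee^{i\mu\tau|V_n|^2}V_n$ at second order produces an $\mathcal R_0(\tau^3)$ error. The regularity-hungry $\mathcal R_{2\gamma}(\tau^{2+\gamma})$ remainder in~\eqref{vpre2d} must be replaced, since $H^{r+2\gamma}$ is no longer available, by the coarser $\mathcal R_0(\tau^2)$ bound obtained by expanding $v(t_n+s)$ only to zeroth order in $s$. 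Finally, the remainder $R_3^\tau$ from the componentwise splitting of the cross-product phases, which~\eqref{BRk3-2d2} gives as $\mathcal R_{2\gamma'}(\tau^{1+2\gamma'})$, is brought inside $H^{r+\gamma}$ by choosing $\gamma'=\gamma/2$, yielding $\mathcal R_\gamma(\tau^{1+\gamma})$. Since $\tau^3\le\tau^2\le\tau^{1+\gamma}$ for $\gamma\le 1$ and $\tau$ sufficiently small, all of these are absorbed into a single local error bound of order $\tau^{1+\gamma}$.

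The stability step is a routine $d$-dimensional extension of Lemma~\ref{lem:lipest}, which also covers the Fourier multiplier $\varphi_1(-2i\tau\Delta)$ (uniformly bounded on $H^r$) and the bilinear operators $K_j$ from~\eqref{eq:int-all}; it relies on the bilinear estimate~\eqref{bil} (which forces $r>d/2$), the isometry~\eqref{iso}, and the fact that the apparent $\partial_j^{-1}$ factors inside $K_j$ act only on functions from which the resonant $k_j=0$ modes have already been peeled off by construction. A Lady Windermere's fan induction on $n$, identical to the one at the end of the proof of Theorem~\ref{thm-1d}, then upgrades the local error $\tau^{1+\gamma}$ to the claimed global bound $\tau^\gamma$. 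The sole obstacle is careful bookkeeping: one must check step by step that no intermediate remainder secretly demands more than $H^{r+\gamma}$ regularity. Once this audit is complete, no essentially new argument is required.
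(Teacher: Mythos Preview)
Your proposal is correct and matches the paper's approach exactly: the paper states that Theorem~\ref{thmd-b} ``makes use of condition~\eqref{BRk3-2db}'' in place of~\eqref{BRk3-2da} and ``is proved in exactly the same way as Theorem~\ref{thmd-a}''. Your bookkeeping --- downgrading the $\mathcal R_{2\gamma}(\tau^{2+\gamma})$ remainder in~\eqref{vpre2d} to $\mathcal R_0(\tau^2)$ and taking $\gamma'=\gamma/2$ in~\eqref{BRk3-2d2} --- spells out precisely the adjustments that the paper leaves implicit in that sentence.
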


Note that for solutions in $H^{r+\gamma}$, $0<\gamma\le 1$ the order of convergence in one dimension is the same as in higher dimensions. We further recall that the refined bilinear estimate $\Vert f g \Vert_{0} \leq c \Vert f \Vert_{0} \Vert g \Vert_{3/2+\varepsilon}$ holds in spatial dimensions $1 \leq d \leq 3$. Therefore, we can extend our convergence result again to $L^2(\mathbb{T}^d)$.

\begin{corollary}\label{cord-2d}
Let $1 \leq d \leq 3$. Assume that the exact solution of \eqref{nls} satisfies $u(t) \in H^{2}(\mathbb{T}^d)$ for $0 \leq t \leq T$. Then, there exists a  constant   $\tau_0>0$ such that for all  step sizes $0<\tau \leq \tau_0$ and times $t_n \leq T$ we have that the global error of \eqref{schemed} is bounded by
\begin{equation*}
\Vert u(t_{n}) - u^{n} \Vert_0 \leq c \tau^{3/2},
\end{equation*}
where $c$ depends on $\sup_{0\leq t \leq T} \Vert u(t)\Vert_{2}$.
\end{corollary}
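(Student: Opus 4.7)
The proof follows the same template as Corollary \ref{cor:1d}. Fix $\varepsilon > 0$ small enough that $r := \tfrac{3}{2} + \varepsilon > d/2$ (which is possible precisely because $d \leq 3$), and set $\gamma := \tfrac{1}{2} - \varepsilon \in (0,1]$, so that $r + \gamma = 2$. Then $u(t) \in H^{r+\gamma}$ by assumption.

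First I would invoke Theorem \ref{thmd-b} with these values of $r$ and $\gamma$ to obtain
$\|u(t_n) - u^n\|_{3/2+\varepsilon} \leq c\,\tau^{1/2-\varepsilon}$. In particular this furnishes an a priori bound $\|u^n\|_{3/2+\varepsilon} \leq C$, uniformly in $n$ for $\tau$ sufficiently small, where $C$ depends only on $\sup_{0\leq t \leq T} \|u(t)\|_2$.

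Next I would repeat the local-error analysis leading to Theorem \ref{thmd-a}, but at the regularity index $r=0$ and with $\gamma = 1/2$, replacing each application of the standard bilinear estimate \eqref{bil} (which fails for $r=0$) by the refined estimate \eqref{eq:est-refined}. In each trilinear remainder term -- in particular the $\widetilde R_3^\tau$ of \eqref{BRk3-2da} now applied at $(r,\gamma) = (0,1/2)$, the $R_3^\tau$ of \eqref{BRk3-2d2}, and the $\mathcal R_{2\gamma}(\tau^{2+\gamma})$ remainder in \eqref{vpre2d} -- one factor is measured in $L^2$ and the remaining factors in $H^{3/2+\varepsilon}$. Since the $H^2$-norm (via $\widetilde R_3 \in \mathcal R_{1+2\gamma}(\tau^{2+\gamma})$ with $1+2\gamma=2$) is only placed on the exact solution, this yields a local $L^2$ error of order $\tau^{5/2}$ under the sole assumption $u(t)\in H^2$. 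An analogous reworking of the Lipschitz-type stability estimates (in the spirit of Lemma \ref{lem:lipest}) in $L^2$, using the a priori $H^{3/2+\varepsilon}$ bound of the numerical iterate for the factors not measured in $L^2$, produces a discrete stability bound with a Lipschitz constant independent of $\tau$.

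Combining these two ingredients via a Lady Windermere's fan argument, exactly as in the proof of Theorem \ref{thm-1d}, then yields $\|u(t_n) - u^n\|_0 \leq c\,\tau^{3/2}$, with $c$ depending only on $\sup_{0\leq t\leq T}\|u(t)\|_2$.

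The principal technical hurdle is bookkeeping: in every trilinear expression arising both in the local error and in the $L^2$-Lipschitz estimates for the correction terms $K_j$, $\varphi_1(-2i\tau\Delta)$ and the phase $\ee^{i\mu\tau|\cdot|^2}$ appearing in \eqref{schemed}, one must arrange the factors so that the $L^2$-norm falls on a difference, at most one factor carries an $H^{3/2+\varepsilon}$-norm of the numerical iterate, and any $H^2$-norm is borne exclusively by the exact solution. The most delicate instance is the remainder $\widetilde R_3^\tau$ of \eqref{BRk3-2da}, whose original bound relied on \eqref{bil} with $r>d/2$ and must now be recast via \eqref{eq:est-refined}.
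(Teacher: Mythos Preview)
Your proposal is correct and follows exactly the strategy the paper indicates (immediately before Corollary~\ref{cor:1d} and again before Corollary~\ref{cord-2d}): first obtain an a priori $H^{3/2+\varepsilon}$ bound on the numerical solution via Theorem~\ref{thmd-b} with $r=3/2+\varepsilon$ and $\gamma=1/2-\varepsilon$, then rerun the local-error and stability estimates in $L^2$ using the refined bilinear estimate~\eqref{eq:est-refined} in place of~\eqref{bil}. Your bookkeeping---placing the $L^2$ norm on the difference, the $H^{3/2+\varepsilon}$ norm on numerical factors, and the $H^2$ norm only on exact-solution factors (specifically in $\widetilde R_3^\tau$ with $\gamma=1/2$)---is precisely what is needed and matches the paper's intent.
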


\section{Numerical examples}\label{sect:num}

The aim of this section is to illustrate the numerical behavior of our Fourier integrator \eqref{uu}. As a test problem, we choose the cubic nonlinear Schr\"{o}dinger equation~\eqref{nls} with $\mu=1$ on the one-dimension torus $\mathbb{T}$. As our approach relies on discrete Fourier techniques, the interval $[0,2\pi]$ representing the torus is equidistantly discretized with the grid points $x_j=jh$, $0\le j\le N-1$ for $N=2^\ell$ and $h = 2\pi/N$. We integrate this equation in time on the interval $[0,1]$ with constant step size $\tau$ and measure the global error at $T=1$ in a discrete $H^r$ norm for $r=0$ or $r=1$. The results obtained with our Fourier integrator are compared with those obtained by standard Strang splitting.

For a function $u\in L^2(\mathbb T)$, we denote $U_j = u(x_j)$ for $j=0,\ldots N-1$ and define its discretization on the grid by $U=(U_0,\ldots,U_{N-1})^{\sf T}$. For this vector, the discrete $L^2$ norm is defined by
$$
\|U\|_0^2 = h \sum_{j=0}^{N-1} |U_j|^2.
$$
The $H^1$ norm of $u$ also includes its first derivative
\begin{equation}\label{eq:first-der}
\partial_x u(x) = \sum_{k\in\Z} ik\hat u_k \ee^{ikx}.
\end{equation}
A discretization of this derivative can be computed by approximating~\eqref{eq:first-der} with the help of the discrete Fourier transform, i.e.~by multiplying the discrete Fourier coefficients $\hat U_k$ of $U$ for $k=-\frac{N}2,\ldots,\frac{N}2-1$ by $ik$ and then transforming back. Let $V$ be the result of this computation. Then, we define the discrete $H^1$ norm of $U$ by
$$
\|U\|_1^2 = h \sum_{j=0}^{N-1} \left( |U_j|^2 + |V_j|^2\right).
$$
Initial data in $H^r$ are computed by choosing uniformly distributed random numbers in the interval $[-1,1]$ for the real and imaginary part of the $N$ Fourier coefficients, respectively. These coefficients are then divided by $(1+|k|)^{r+1/2}$ for $k=-\frac{N}2,\ldots,\frac{N}2-1$ and finally transformed back with the discrete Fourier transform to get the desired discrete initial data in physical space.

In this section, we carry out two different type of experiments. First, we study the full error at time $T = n\tau =1$ for various values of $N$. The results for initial data in $H^5$ are given in Figure~\ref{fig:full1-2}, those for initial data in $H^3$ are given in Figure~\ref{fig:full3-4}. For smooth initial data, both integrators are second-order convergent in time and their performance compares quite well. For initial data in $H^3$, however, Strang spitting shows a very irregular behavior and order reduction occurs whereas the Fourier integrators is still second-order convergent. Note that the spatial discretization error behaves as $N^{-4}$ in Figure~\ref{fig:full1-2} and as $N^{-2}$ in Figure~\ref{fig:full3-4}. Such a behavior is well expected (see \cite[Prop.~IV.14]{Faou12})\black, but not further analysed in this paper.

\begin{figure}[h!]
\centering
\includegraphics[width=0.48\linewidth]{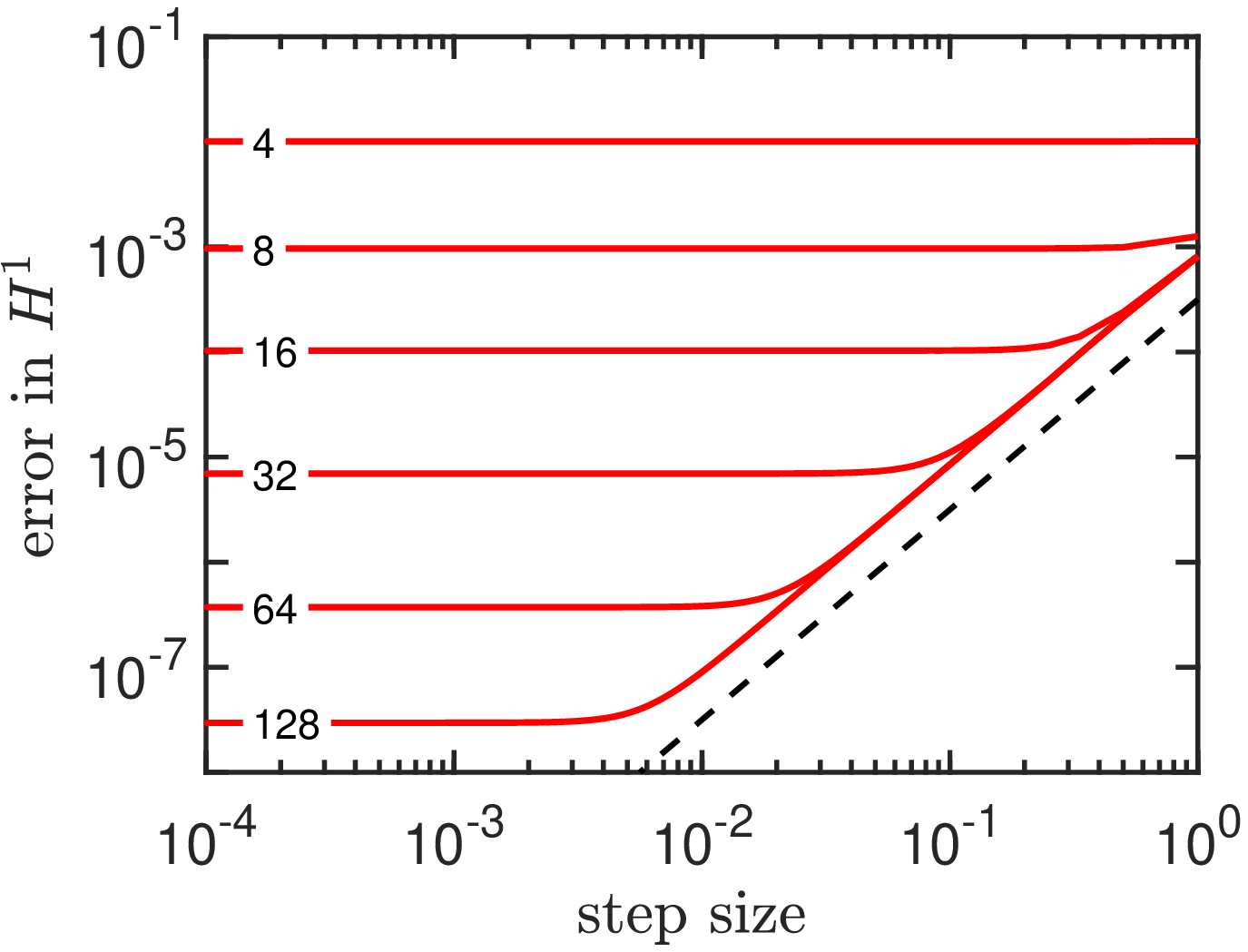}
\hfill
\includegraphics[width=0.48\linewidth]{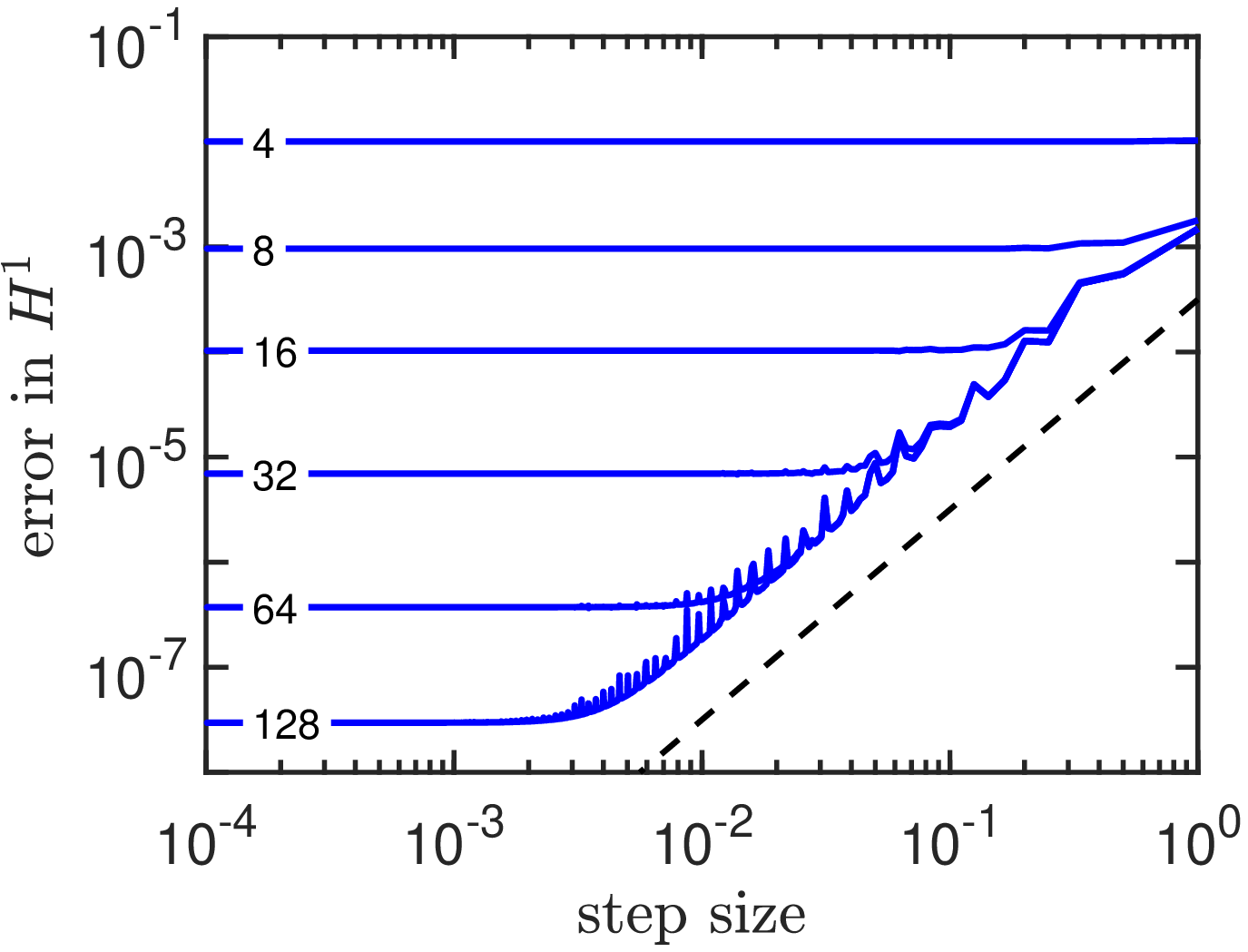}
\caption{Full error of the Fourier integrator (left, red) and classic Strang splitting (right, blue) at $T=1$ for smooth initial data in $H^5$ as a function of the step size $\tau$ and the number $N$ of discretization points in space (small numbers in the graphs). The dashed line has slope two.}\label{fig:full1-2}
\end{figure}

\begin{figure}[h!]
\centering
\includegraphics[width=0.48\linewidth]{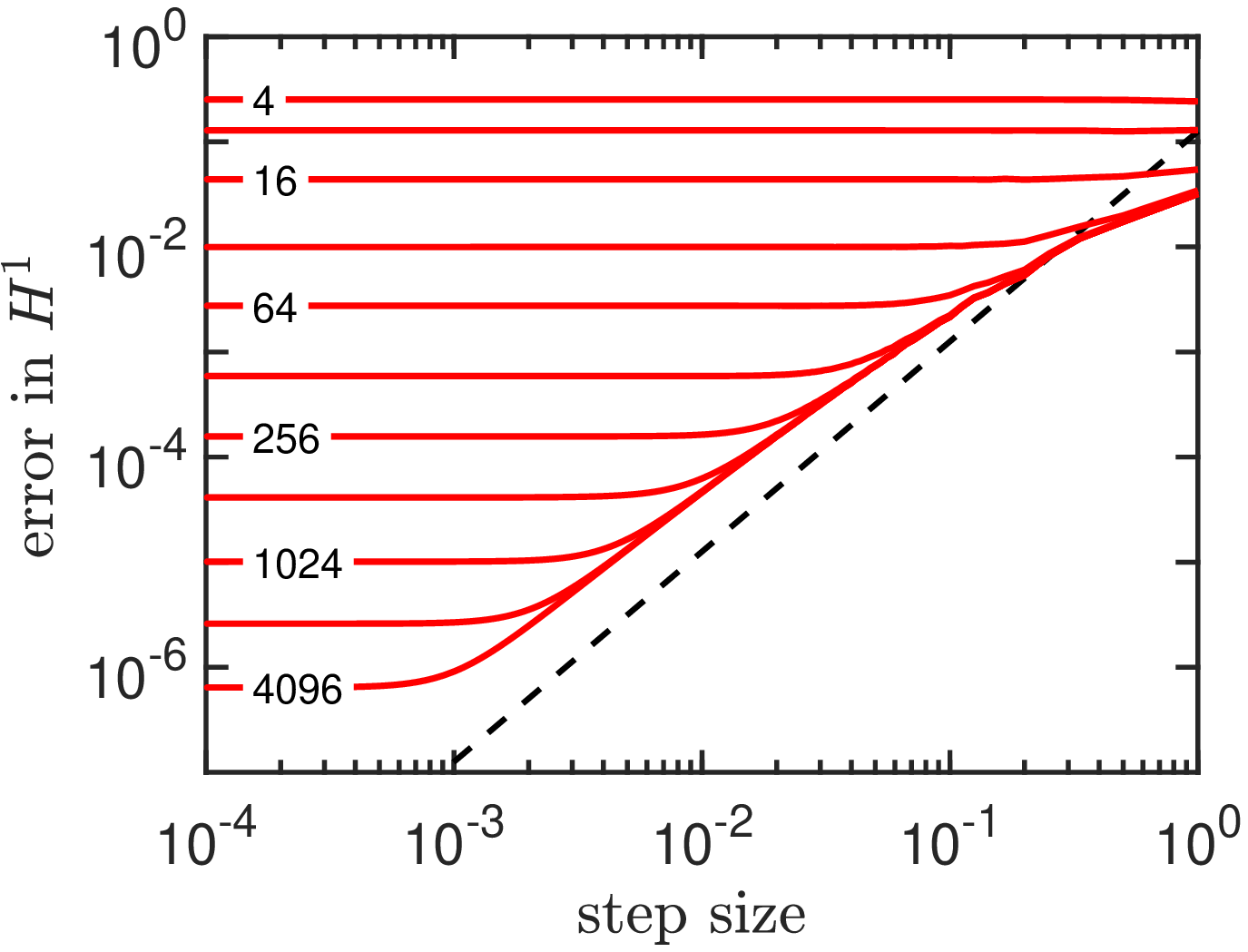}
\hfill
\includegraphics[width=0.48\linewidth]{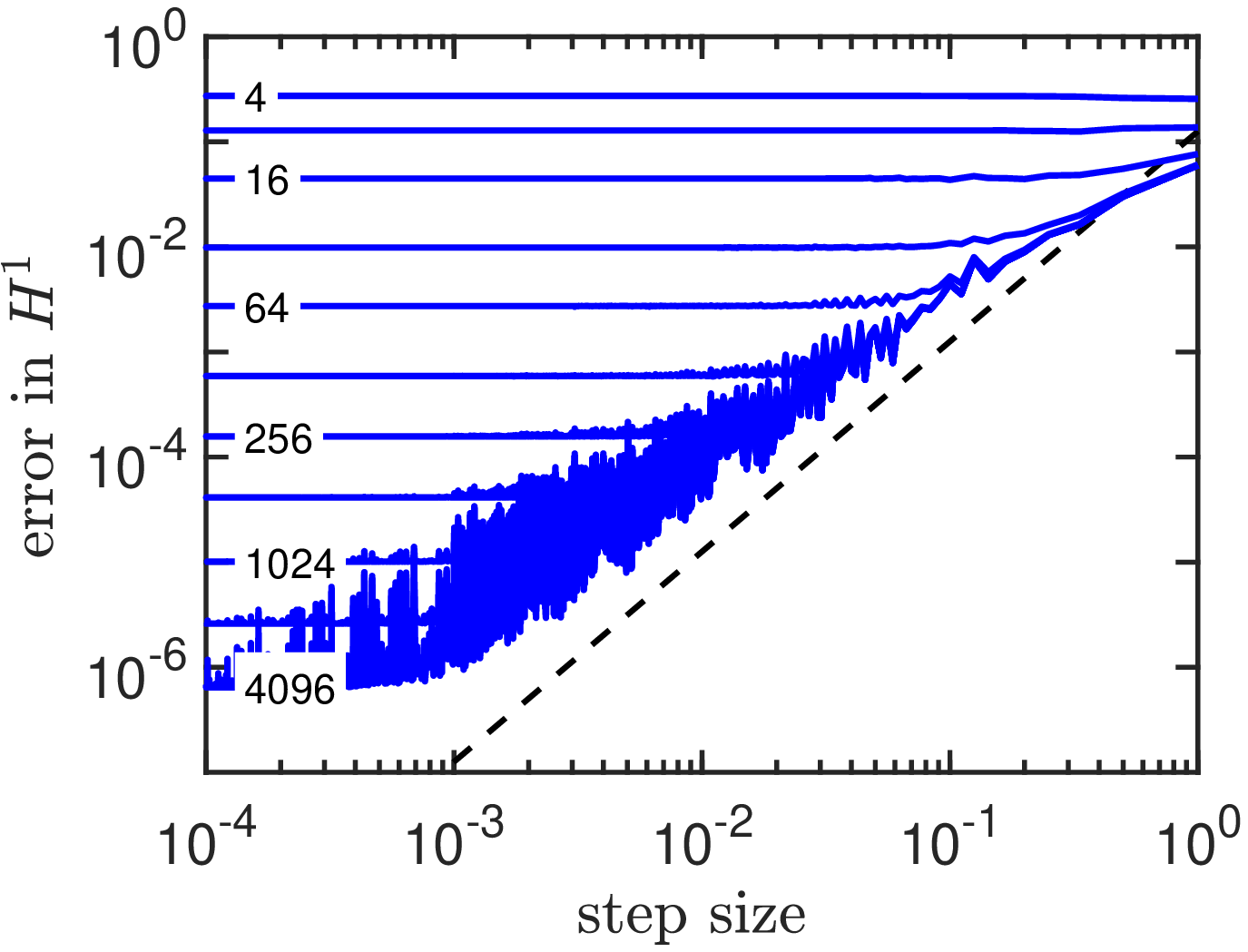}
\caption{Full error of the Fourier integrator (left, red) and classic Strang splitting (right, blue) at $T=1$ for non-smooth initial data in $H^3$ as a function of the step size $\tau$ and the number $N$ of discretization points in space (small numbers in the graphs). The dashed line has slope two.}\label{fig:full3-4}
\end{figure}

\begin{figure}[t]
\centering
\includegraphics[width=0.48\linewidth]{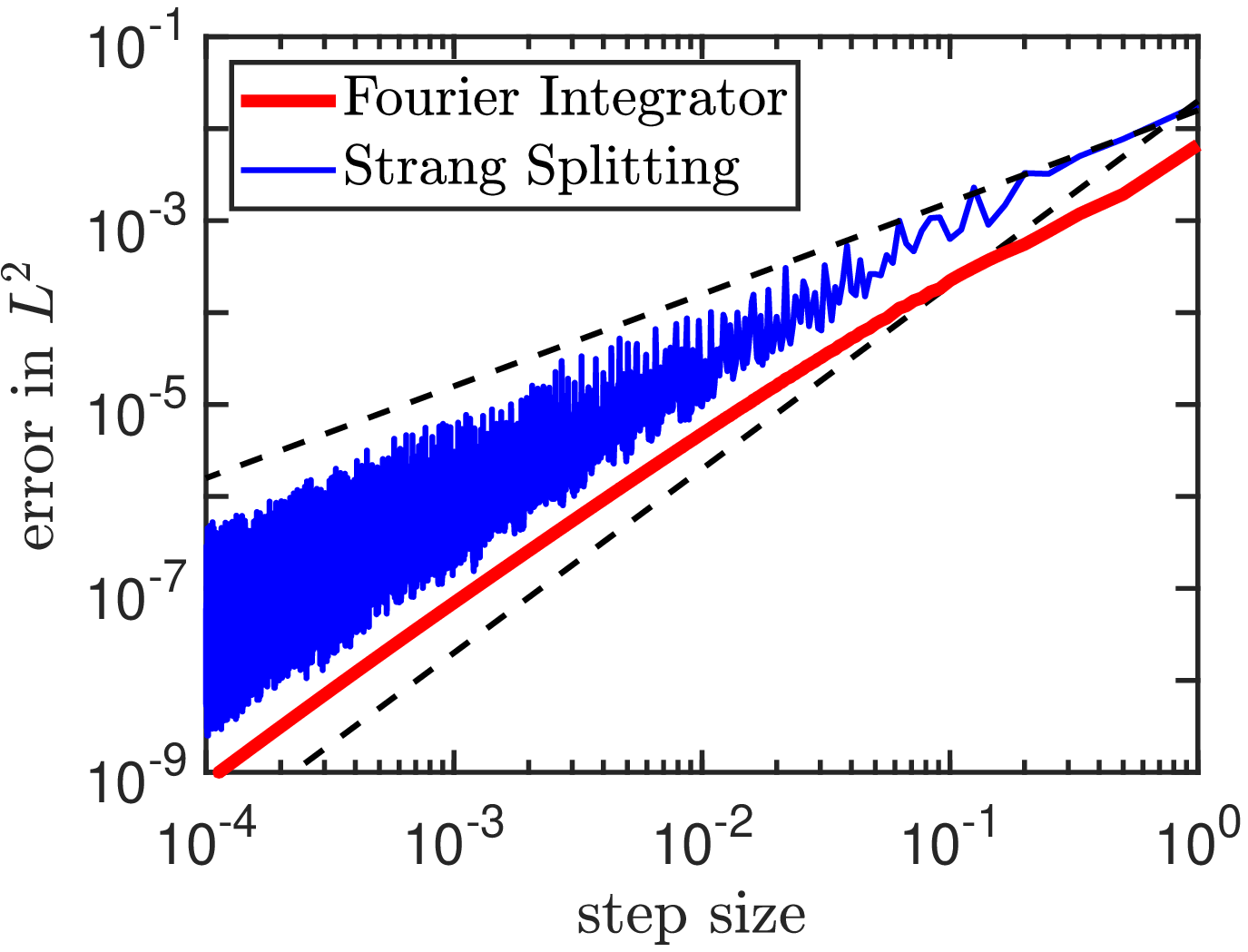}
\hfill
\includegraphics[width=0.48\linewidth]{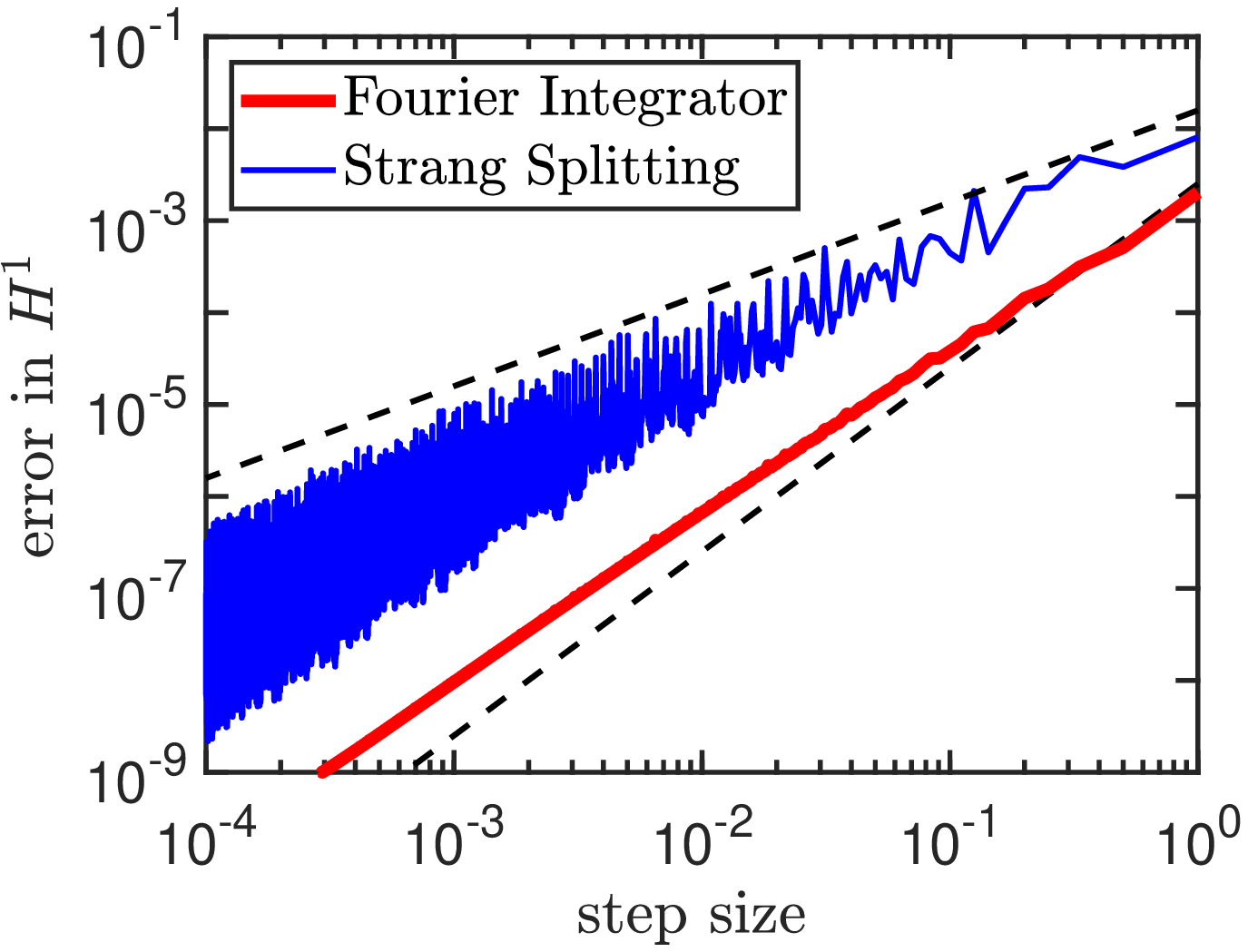}
\caption{Time error of the Fourier integrator (thick line, red) and classic Strang splitting (thin line, blue) at $T=1$. The error is plotted as a function of the step size $\tau$ for a fixed number $N=2^{12}$ of space discretization points. On the left, the error is measured in $L^2$ with initial data in $H^2$; on the right, the error is measured in $H^1$ with initial data in $H^3$. The broken lines are of slope one and two, respectively.}\label{fig:bilder1-2}
\end{figure}

In the second experiment, we study the time error only. The spatial discretization is kept fixed here with $N=2^{12}$. As a reference solution for the Fourier integrator, we take a very accurate solution obtained with Strang splitting, and vice versa. Figure~\ref{fig:bilder1-2} displays on the left the errors in $L^2$ for initial data in $H^2$ and on the right the errors in $H^1$ with initial data in $H^3$. For such non-smooth initial data, the order of Strang splitting is reduced to one whereas the Fourier integrator is still second-order convergent. This behavior of the Fourier integrator was proved in Theorem~\ref{thm-1d} and Corollary~\ref{cor:1d}. For smoother initial data, however, Strang splitting recovers its usual performance. This is shown in Figure~\ref{fig:bilder3-4} for initial data in $H^4$ (left) and $H^5$ (right). The errors in this figure are measured in $H^1$.

\begin{figure}[t]
\centering
\includegraphics[width=0.48\linewidth]{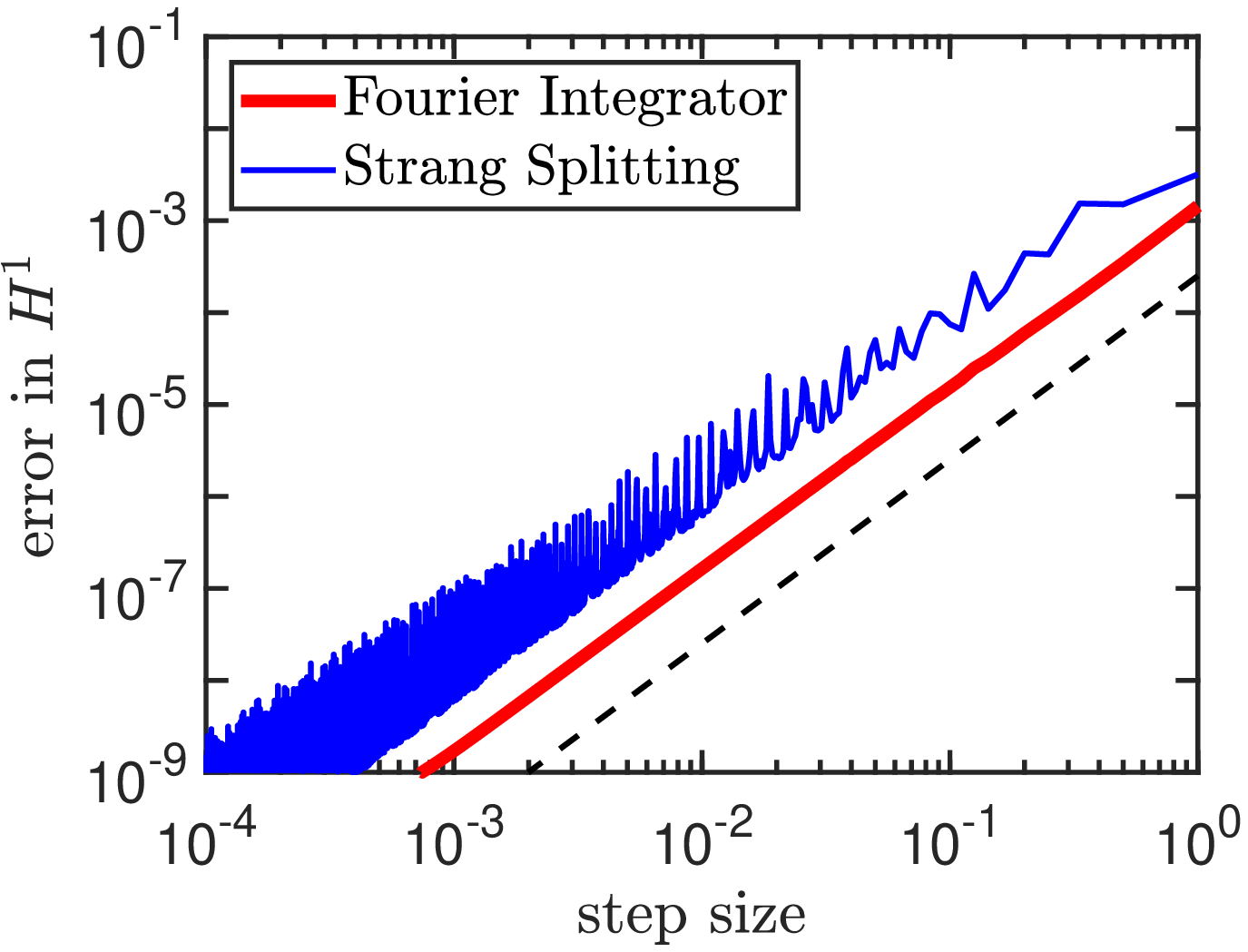}
\hfill
\includegraphics[width=0.48\linewidth]{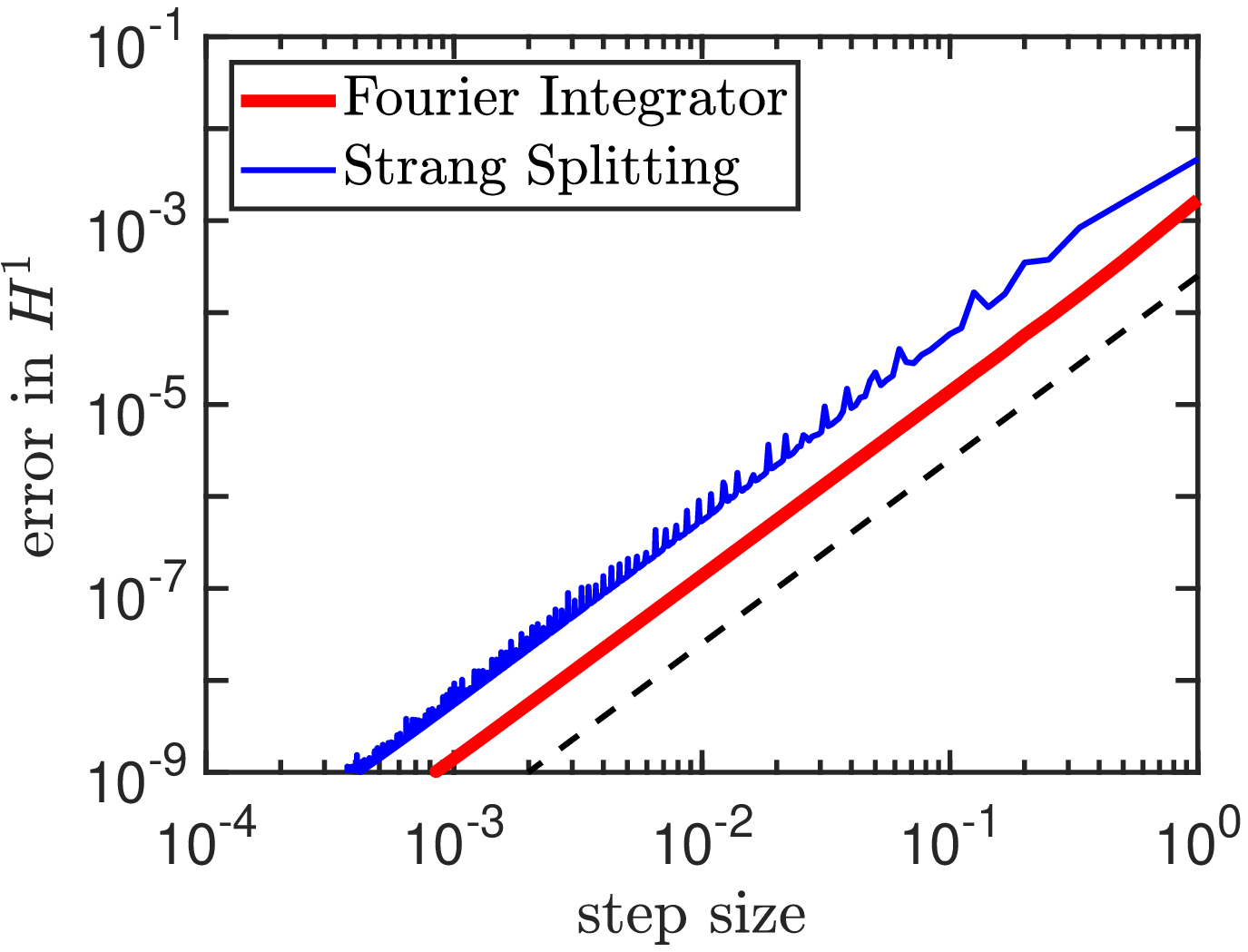}
\caption{Time error of the Fourier integrator (thick line, red) and classic Strang splitting (thin line, blue) at $T=1$. The error is plotted as a function of the step size $\tau$ for a fixed number $N=2^{12}$ of space discretization points. On the left, the error is measured in $H^1$ with initial data in $H^4$; on the right, the error is measured in $H^1$ with initial data in $H^5$. The broken line is of slope two.}\label{fig:bilder3-4}
\end{figure}

\begin{figure}[t]
\centering
\includegraphics[height=4.2cm]{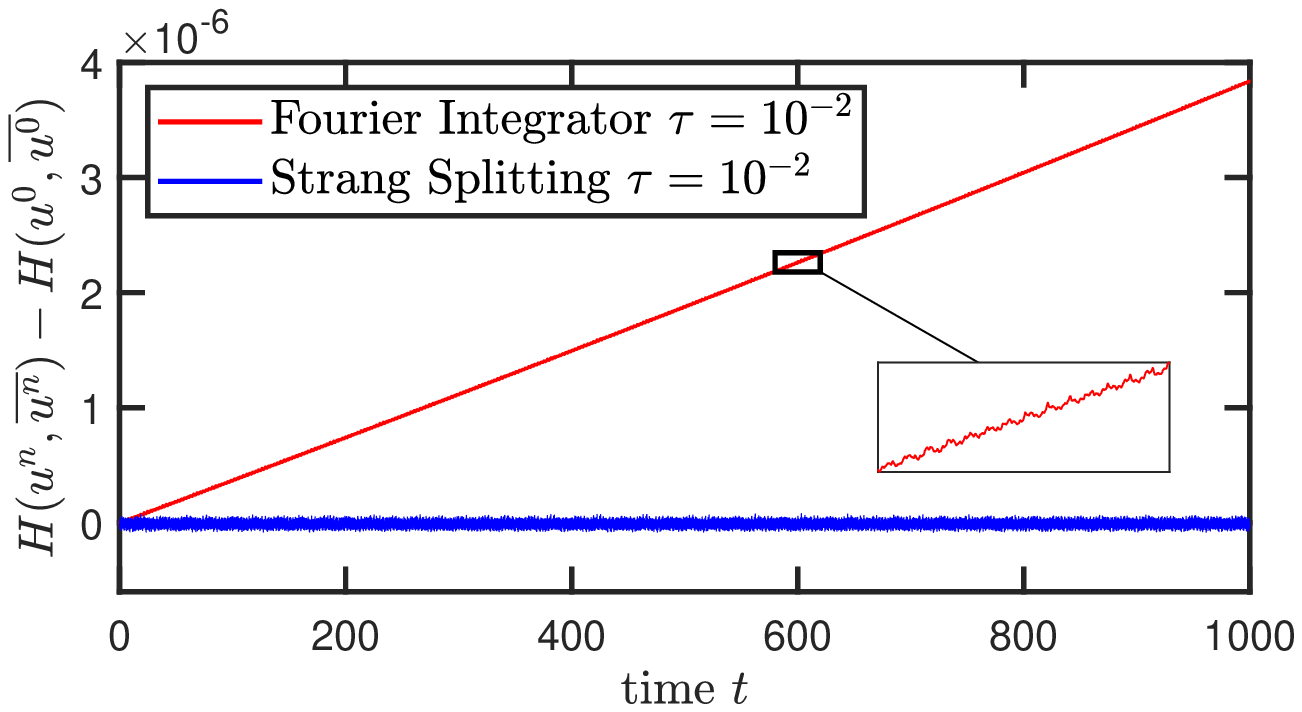}
\hfill
\includegraphics[height=4.2cm]{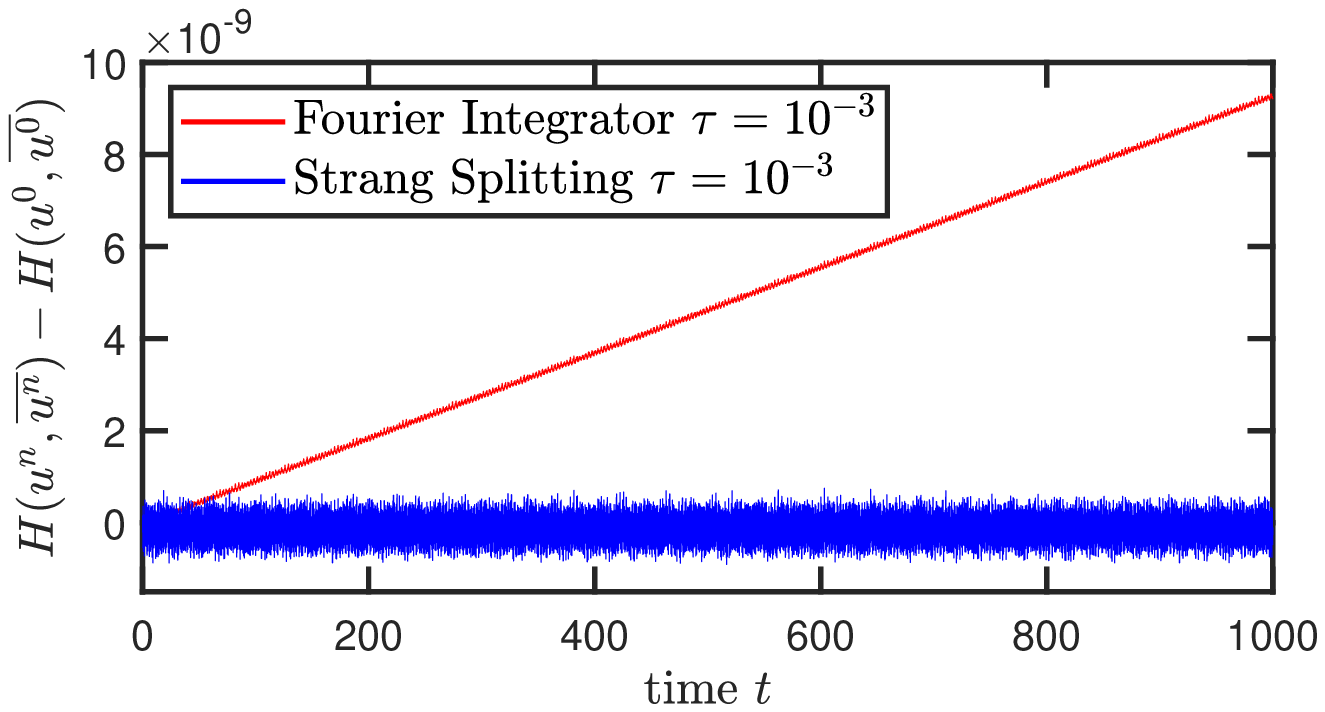}

\vspace{4mm}
\includegraphics[height=6.3cm]{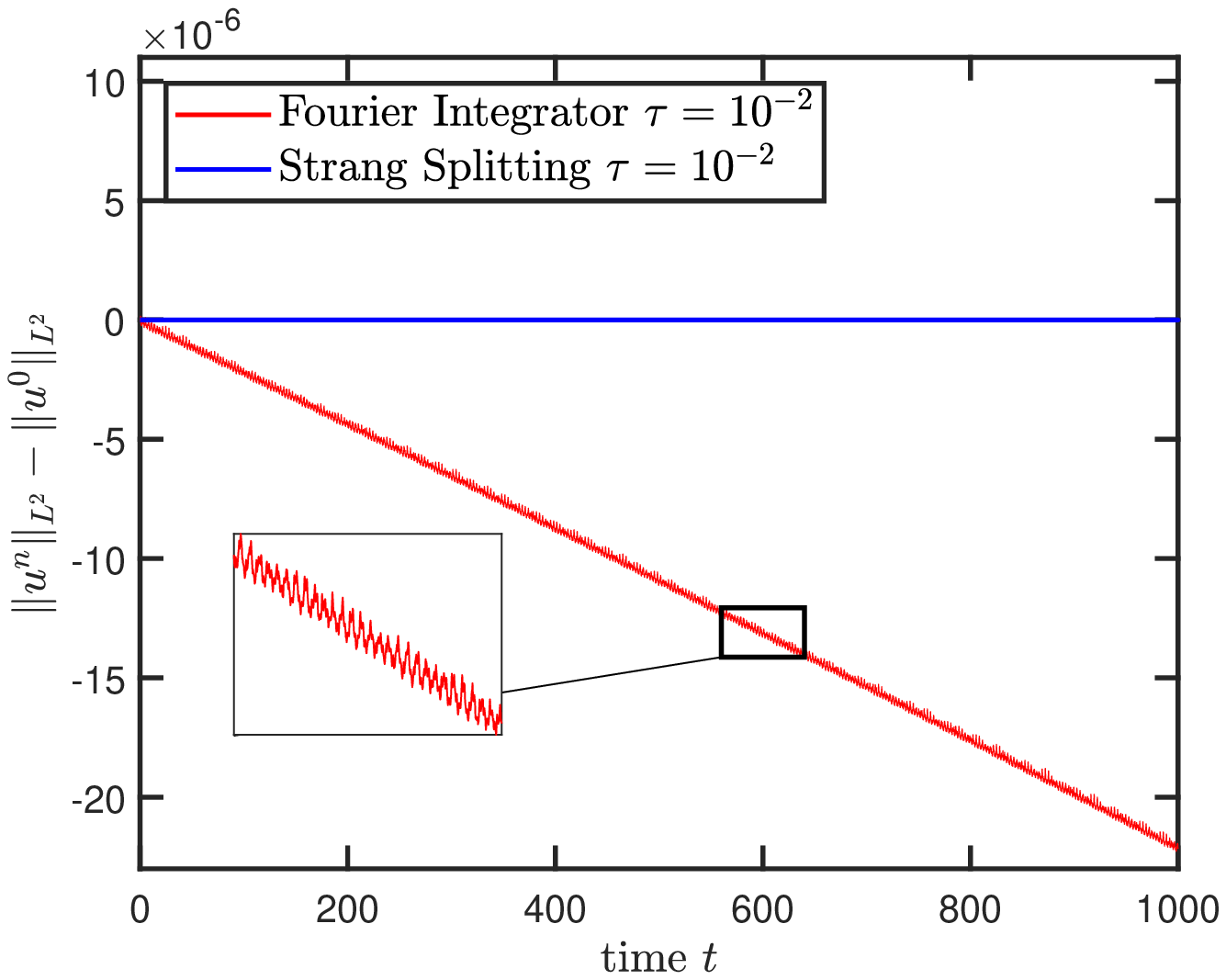}
\hfill
\includegraphics[height=6.3cm]{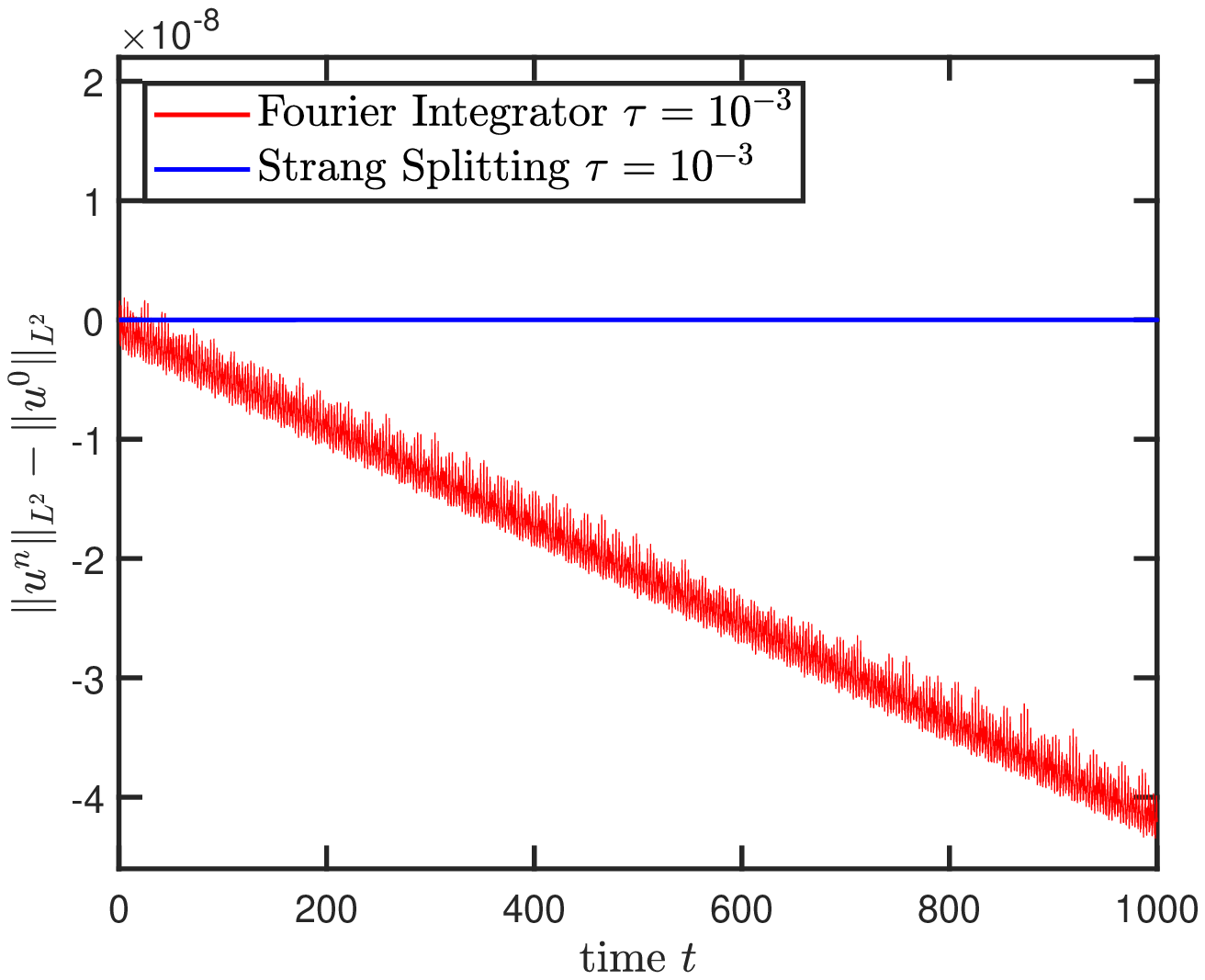}
\caption{Numerical preservation of energy and mass. The upper two figures display the total energy of the numerical solution for an $H^3$ initial value; the lower two figures show the mass of the numerical solution for an $H^2$ initial value. Both experiments were carried out for $N=2^{12}$ grid points and show very good preservation properties.\black }\label{fig:norm_and_energie}
\end{figure}

Note that the Schr\"odinger equation has a Hamiltonian structure preserving the total energy of the system
\begin{equation}\label{ham}
H\big(u(t),\overline{u}(t)\big) = \frac{1}{(2\pi)^d} \int_{\mathbb{T}^d} \left(\vert \nabla u(t,x)\vert^2 + \frac{\mu}{2}\vert u(t,x)\vert^4\right)\mathrm{d}x = H\big(u(0),\overline{u}(0)\big)
\end{equation}
as well as the $L^2$ norm of the exact solution
\begin{equation}\label{l2}
\Vert u(t)\Vert_{L^2}^2 = \Vert u(0)\Vert_{L^2}^2.
\end{equation}
In Figure~\ref{fig:norm_and_energie} we investigate the numerical structure preservation of these quantities under our Fourier integrator \eqref{uu}: we simulate the evolution of the total energy $H\big(u^n,\overline{u^n}\big)$ and mass $\Vert u^n\Vert_{L^2}^2$ up to $t_n = n\tau = 1000$. Note that Strang splitting preserves the mass, but does not preserve the energy exactly. Our Fourier integrator \eqref{uu} also shows very good conservation properties. Although there is a linear growth in mass and energy, the corresponding coefficient is very small and decays fast with decreasing $\tau$. A thorough analysis of this behavior is considered as future work.\black
\section{Conclusions}\label{sect:conclusions}

In this paper, we developed a new Fourier integrator for the numerical solution of the cubic nonlinear Schr\"{o}dinger equation with non-smooth initial data. The new integrator compares well with classic Strang splitting for smooth initial data. Note, however, that Strang splitting requires less fast Fourier transforms per time step and is thus preferable in this situation. On the other hand, for non-smooth initial data, the new integrator is much more reliable and efficient compared to Strang splitting which delivers irregular results and is prone to order reduction.

\end{document}